\documentclass[11pt,noinfoline]{article}

%%%  The following shows labels in at all usages.
%\usepackage{showkeys}
%%%  The following shows labels only when they first appear.
%\usepackage[notref, notcite]{showkeys}

\RequirePackage[amsthm,amsmath,natbib]{imsart}
\RequirePackage[OT1]{fontenc}
\RequirePackage{mymathbbol}
\usepackage{amsfonts,latexsym,amssymb}%,mymathbbol}
\def\bb{\mathbb}
\usepackage{graphics,graphicx}

%%FOR LABELS
%\usepackage[notref, notcite]{showkeys}

%\documentclass{aptpub}
%\usepackage{graphics,graphicx}
%\usepackage[colorlinks=true,linkcolor=black,citecolor=black,urlcolor=black]{hyperref}

%\usepackage{amsfonts,amsmath,latexsym,amssymb,makeidx,mymathbbol}
%\usepackage[colorlinks=true,linkcolor=black,citecolor=black,urlcolor=black]{hyperref}

%\RequirePackage{hypernat}

\startlocaldefs
\def\bb{\mathbb }
\def\hf{\widehat f}
\def\:{:\,}

\def\cal{\mathcal}
\def\a{\alpha}
\def\b{\beta}
\def\tsigma{{\widetilde\sigma}}

\def\det{{\mbox{\rm det}}}

\def\LKC{Lipschitz-Killing curvature}

\def\P{{\bb P}}
\def\E{{\bb E}}
\def\smallhalf{\mbox{ $\frac{1}{2}$}}

\def\e{{\varepsilon}}
\def\p{\varphi}
\def\real{{\bb{R}}}

\def\RN{\mathbb{R}^N}

\def\indic{1}

\def\definedas{\stackrel{\Delta}{=}}

\def\sas{$S\alpha S$}
\def\SaS{$S\alpha S$}

\newcommand{\sqbinom}[2]{\begin{bmatrix}#1 \\ #2 \end{bmatrix}}

\def\EC{Euler characteristic}

\def\cO{{\cal O}}

\def\qed{\hfill $\Box$\newline}

\newcommand{\bc}{\begin{center}}
\newcommand{\ec}{\end{center}}

\newcommand{\beq}{\begin{eqnarray}}
\newcommand{\eeq}{\end{eqnarray}}
\newcommand{\beqq}{\begin{eqnarray*}}
\newcommand{\eeqq}{\end{eqnarray*}}

\newcommand{\Rr}{\mathbb{R}}

\newcommand{\lips}{{\cal L}}

\newcommand{\Tube}{{\rm Tube}}

\def\text{\mbox}

\newcommand{\graff}{{\rm Graff}}

\newcommand{\vep}{\varepsilon}

\newtheorem{theorem}{Theorem}[section]
 
 \newtheorem{lemma}[theorem]{Lemma}
 
 \newtheorem{corollary}[theorem]{Corollary}

\endlocaldefs

\begin{document}
\begin{frontmatter}
\title{Excursion sets of stable random fields} \runtitle{Excursion
sets of stable random fields}
\begin{aug}
\author{\fnms{Robert J.} \snm{Adler}\thanksref{t1}\ead[label=e1]{robert@ieadler.technion.ac.il}
\ead[label=u1,url]{ie.technion.ac.il/Adler.phtml}}
\author{\fnms{Gennady}
  \snm{Samorodnitsky}\thanksref{t1,t2}\ead[label=e2]{gennady@orie.cornell.edu}
\ead[label=u2,url]{www.orie.cornell.edu/$\sim$gennady/} }
\and
\author{\fnms{Jonathan E.} \snm{Taylor}\thanksref{t1,t3}
\ead[label=e3]{taylor@dms.umontreal.ca}
\ead[label=u3,url]{www-stat.stanford.edu/$\sim$jtaylo/}}

\thankstext{t1}{Research supported in part by US-Israel Binational
Science Foundation, grant 2004064.}
\thankstext{t2}{Research supported in part by NSA grant MSPF-05G-049 and ARO
grant W911NF-07-1-0078}
\thankstext{t3}{Research supported in part by NSF grant DMS-0405970,
and the Natural Sciences and Engineering Research Council  of Canada.}
\runauthor{Adler, Samorodnitsky and Taylor}
\affiliation{Technion, Cornell, Stanford and Montreal}
\end{aug}
\begin{abstract}
Studying the geometry generated by Gaussian and Gaussian-related random fields
via their excursion sets is now a well developed and well understood subject.
The purely non-Gaussian scenario has, however, not been studied at all.
In this paper we look at three classes of stable random fields, and obtain
asymptotic formulae for the mean values of various geometric characteristics
of their excursion sets over high levels.

While the formulae are asymptotic, they contain enough information to show that not
only do stable random fields exhibit geometric behaviour very different 
from that of
Gaussian fields, but they also differ significantly among themselves.

\end{abstract}
\begin{keyword}[class=AMS]
\kwd[Primary ]{60G52, 60G60; }
\kwd[Secondary ]{60D05, 60G10, 60G17.}
\end{keyword}

\begin{keyword}
\kwd{Stable random fields, harmonisable fields, excursion sets,
Euler characteristic, intrinsic volumes, geometry.}
\end{keyword}
\end{frontmatter}

\section{Introduction}

We are interested in the structure of the sample paths of certain smooth
stable random fields $f\:\RN\to\real$,
$N\geq 1$.
We shall study these through certain  geometric properties of their
{\it excursion sets}
  \beq
\label{excursion-set}
A_u \ \equiv\ A_u(f,M)\ \definedas \
\left\{ t\in M\: f(t)\geq u\right\},
\eeq
where $M\subset\RN$
and $u\in\real$.
%Although we shall later take $M$ to be a stratified
%manifold, for the moment you can think of it as the unit cube in $\RN$.

Excursion sets have been widely studied for Gaussian and Gaussian
related random fields. Their  applications appear in
disciplines as widespread as astrophysics and medical imaging, where
they have been also been used in a variety of hypothesis testing
situations A good introductory reference to the applications is
still Keith Worsley's exposition \cite{WOR97} although
\cite{ARF}, when ready, will have a lot more detail. On the  more
theoretical side, where excursion sets are seen to generate
an elegant geometric structure, our basic reference will be
the recent monograph \cite{RFG}.

The Gaussian and Gaussian related scenarios allow for the
development of explicit formulae for the expectations of many
of the geometrical quantifiers of  excursion sets.
Unfortunately, one cannot expect that the same
will occur in the stable case, for
which our basic reference will be  \cite{ST94}. Here, explicit formulae
for even the marginal densities of $f$ are unknown, although much
is  known about their asymptotics. Indeed,
when we began this research we expected to be able to find little
beyond some asymptotic formulae  relating to the excursion sets
generated by stable fields that might mimic the Gaussian ones, much as we did
for level crossings of stable processes on the real line, in
 \cite{AdlerSam97} and \cite{AST93}.

What we found turned out to be far more interesting. It is well known
that the structure of stable processes is far more complicated
than that of their
Gaussian counterparts. For example, whereas in the Gaussian case
many stationary processes have both a moving average  representation
(with respect to white noise) as well as a harmonisable representation,
in the stable case moving average and harmonisable processes
belong to quite distinct families. These differences are well
understood in terms of mathematical structure, but it turns out that
the sample path distinctions between different classes of stable
processes  become significantly highlighted by looking
at the excursion sets that they generate, primarily in the
multi-dimensional setting.

Thus, this paper has two aims. One is to provide explicit, albeit
asymptotic, formulae for the expectations of the Euler characteristics
(defined below) and other geometric quantifiers of excursion sets generated by stable random
fields. These, we believe, will find immediate application  in random
field modelling using these processes. The second is to better
understand the differences between various stable random
fields via their excursion sets.

The remainder of the
paper is structured as follows. In the next section we shall
 define the Euler characteristic
and related \LKC s. In Section
\ref{section:Gauss} we give a
description of the main result from the Gaussian theory, which is a precise formula
for the expected value of the \LKC s of Gaussian excursion sets. With this in
hand, in Sections \ref{section:expect}--\ref{main:wavelimited}  we start with
the new results, for sub-Gaussian, harmonisable, and concatenated-harmonisable random
fields, deriving asymptotic formulas for the expected values of the Euler characteristics
of their excursion sets. In Section \ref{section:isotropic} we show how to lift these
results to all the \LKC s, at least for isotropic fields.
A technical appendix  completes the paper.

\section{Euler characteristics and \LKC s}
\label{section:Euler}

Throughout this paper our parameter sets $M$ will be taken to be convex subets of
$\RN$, and, for much of it, we shall restrict ourselves to
 $N$-dimensional rectangles. In these cases we shall always write $T$ rather than $M$,
where $T$ is given by
\beq
\label{T:definition}
T\definedas \prod_{j=1}^N [0,T_i].
\eeq

For convex sets in $\RN$, and for excursion sets of smooth functions
defined over them, there
are $N+1$ functionals which describe their geometry. These are known under a
variety of names, including
{\it Minkowski functionals, intrinsic volumes,  quermassintegrals},
and {\it \LKC s}, being related to one another by
differences in the way they are ordered or  normalized. For consistency with
\cite{RFG}, which we shall use heavily when we wish to cite a result
without proof, we shall work with \LKC s. Perhaps the easiest way to
define these is via Steiner's formula, a classic result of integral geometry.

To state Steiner's formula, let  $M$ be a convex set of dimension
$N$, sitting in $\real^{N'}$, where $N'\geq N$. (e.g.\ $M$ is a
one-dimensional curve in $\real^2$.) The {\it tube of radius $\rho$}
around $M$ is defined to  be
\beqq
%\label{geometry:tube:formula}
\Tube (M,\rho) = \{x \in \real^{N'}\: \inf_{y\in M} |x-y| \leq \rho\}.
\eeqq

With $\lambda_N$ denoting  Lebesgue measure in $\RN$, Steiner's
formula states that there is an exact polynomial expansion of order
$N$ for the $\lambda_{N'}$ measure of $\Tube (M,\rho)$ given by
 \beq
\label{geometry:steiner:formula}
\lambda_{N'}
\left(\Tube (M,\rho)\right)  =
\sum_{j=0}^N \omega_{N'-j} \rho^{N'-j} \lips_j(M),
\eeq
where
\beqq
%\label{geometry:ballvolume:formula}
\omega_j =
\frac{\pi^{j/2}}{\Gamma\bigl(\frac{j}{2} + 1 \bigr)}%= \frac{s_j}{j}
\eeqq
is the volume of the unit ball in $\Rr^j$. The numbers $\lips_j(M)$
are  the {\it \LKC s}  of $M$.

The $\lips_j$  scale nicely, in the sense that, for $\lambda >0$,
$\lips_j(\lambda M)  =  \lambda^j\lips_j(M)$. As is obvious from
\eqref{geometry:steiner:formula}, $\lips_N(M)$ measures the volume of
$M$, $\lips_{N-1}(M)$ is related to its surface measure, etc. The last
one, $\lips_0(M)$, is the {\it Euler characteristic} of $M$ which,
since we shall use it very often, we also denote by $\p (M)$.
Of all of the $\lips_j(M)$, the Euler characteristic is the only one that does not
change under smooth deformations of $M$. If $M$ is one dimensional, then $\p (M)$ counts
the number of connected components in $M$. If $M$ is two dimensional, then it counts the
number of connected components minus the number of holes. In three dimensions, it counts
the number of connected components, minus the number of `handles', plus the number of
holes.

We note, for later usage, that the \LKC s of the $N$-rectangle
 \eqref{T:definition} are given by
\beq
\label{geometry:LK-rectangle:equation}
\lips_j(T) = \lips_j\Big(\prod_{i=1}^{N} [0,T_i]\Big)
 =  \sum  T_{i_1}\dots T_{i_j},
\eeq
where the sum is taken over the ${N \choose j}$ distinct choices of
subscripts $i_1,\dots,i_j$.

There is another way to write \eqref{geometry:LK-rectangle:equation}, which will be
useful later on. Let
 $\cal O_j\equiv \cal O_j(T)$ denote the collection of the
${N \choose j}$ $j$-dimensional facets of
$T$ which contain the origin. (Thus, for example, $\cal O_N$ is $T$ itself,
while $\cal O_1$ contains the $N$ one-dimensional edges of $T$ lying
on the positive axes $e_1,\dots,e_N$ of $\real^N$.) Furthermore,
if $J$ is a facet in $\cal O_j$, let $|J|$ denote
its $j$-dimensional Lebesgue measure. Then it is immediate from
\eqref{geometry:LK-rectangle:equation} that
\beq
\label{geometry:LK-rectangl2e:equation}
\lips_j(T)  =  \sum_{J\in\cal O_j} |J|.
\eeq

The \LKC s play a central r\^ole in much of integral geometry, but for the moment
we shall note only one of their properties, known as Hadwiger's theorem
\cite{H57}.
Suppose that we have a functional $\psi$ on compact convex sets that
is additive, in the sense that, if $A$, $B$ and $A\cup B$ are compact
convex, then
\beq
\psi(A \cup B) = \psi(A)+\psi(B)-\psi(A \cap B).
\label{G6.1.2}
\eeq
If it is also true that $\psi$ is invariant under rigid motions and
 continuous in the Hausdorff metric, then
there are  ($\psi$-dependent) constants such that
\beq
\label{geometry:additivity-func:equation}
\psi(A) =  \sum_{j=0}^N c_j \lips_j(A).
\eeq
Thus, studying intrinsic volumes is equivalent to studying a far wider class of functionals
on sets. Our aim is to study the \LKC s of excursion sets.

\section{Gaussian excursion set geometry}
\label{section:Gauss}
In this section we want to summarise some results about the excursion sets of
Gaussian random fields. There are two reasons for bringing these. The first
is that it gives us a basis to which to compare the results of this
paper for stable random fields,
 and the second is that, in all the cases that we shall consider in this
paper, the proof for the
stable case follows from the Gaussian one and a conditioning argument.

To state the main result for the Gaussian case, we need a little notation,
for which we now assume that $f$ is a mean zero, stationary,
 Gaussian random field on  $\RN$ with constant variance $\sigma^2$.
Assuming that $f$ is also almost surely $C^2$, we define the second order
spectral moments
\beqq
%\label{lambdaij}
\lambda_{ij}=\E\left\{ \frac{\partial f(t)}{\partial t_i}
                 \frac{\partial f(t)}{\partial t_j} \right\}.
\eeqq
With $\{e_1,\dots,e_N\}$ denoting  the positive axes of $\RN$, suppose that $J\in\cal O_j$ is
a $j$-dimensional facet of a rectangle $T$. We write $\Lambda_J$ for the matrix
\beq
\label{LambdaJ}
\Lambda_J\ \definedas \ \left\{\lambda_{ij}\:i,j\in \sigma(J)\right\},
\eeq
where
\beqq
%\label{sigmaJ}
\sigma(J)\definedas \left\{j\:e_{j}\cap (J\setminus \{0\})\neq \emptyset\right\}.
\eeqq
Note that if $f$ is isotropic then there is a constant, which we shall write as
$\lambda_2$, such that
\beq
\label{lambda2}
\lambda_{ij}= \begin{cases}
\lambda_2& i=j,\\ 0 &i\neq j.
\end{cases}
\eeq

Next, we need the Hermite
polynomials
\beqq
%\label{Hermite:equation} \qquad
H_n(x) =  n!\, \sum_{j=0}^{\lfloor n/2\rfloor}
\frac{(-1)^j x^{n-2j}}{j!\, (n-2j)!\, 2^j}, \qquad n\geq 0,\ x\in\real,
\eeqq
where $\lfloor a\rfloor$ is the largest integer  less than or equal to
$a$, and, for notational convenience, we define
\beqq
 H_{-1}(x) = \sqrt{2\pi}\Psi (x)e^{x^2/2},
\eeqq
where
\beqq
%\label{G1.2.1}
\Psi (x) \definedas
 \frac{1}{\sqrt{2\pi}}
\int_x^{\infty} e^{-u^2/2} \,du.
\eeqq
We also adopt the notation
\beqq
%\label{equation:rho}
 \rho_n(u)
 = {(2 \pi)^{-(n+1)/2}}\, H_{n-1}(u)\,e^{-u^2/2}, \qquad
n \geq 0.
\eeqq
The following is a combination of Theorem 11.7.2 and the discussion in Section 11.8
of \cite{RFG}.

\begin{theorem}
 \label{rgeometry:EEC:theorem}
Let $f$ be a zero mean, stationary Gaussian
field on  a  $N$-rectangle
$T$ with  variance $\sigma^2$ and a.s.\ $C^2$ sample paths, 
and such that the joint
distribution of $f$ and its first and second derivatives at each  point $t\in T$
 is non-degenerate.
%Furthermore, writing $C_{f_{ij}}(t)$ for the covariance function
%of $\partial^2f/\partial t_i\partial t_j$, assume that, for small enough $t$, and some
%finite $K$,
%\beq
%\label{rgeometry:logcovariance2:equation}
%\max_{i,j} \left|C_{f_{ij}}(0) - C_{f_{ij}}(t)\right|\
%\leq \ K\left|\ln |t|\,\right|^{-(1+\alpha)}.
%\eeq
Suppose that the joint modulus of continuity
$\omega(\eta)$ of all the second order partial derivatives of $f$ satisfies
\beq
\label{modcont:equn}
\P\left\{\omega(\eta)>\varepsilon\right\}  =  o\left(\eta^N\right)
\qquad\text{as $\eta\downarrow 0$}
\eeq
for all $\varepsilon >0$. Then the mean value of the Euler characteristic
 of  its excursion set is  given by
\beq
\label{rgeometry:EEC:equation}
   \E\left\{\p \left(A_u(f,T)\right)\right\}  =
\sum_{n=0}^{N}
\sum_{J\in \cO_n}
\frac{|J|\,|\det \left(\Lambda_J\right)|^{1/2}}{\sigma^{n}}\,
\rho_{n}\left(\frac{u}{\sigma}\right).
\eeq
Furthermore, if $f$ is isotropic and  $M$ compact and convex, then,
for all $0\leq j\leq N$,
\beq
\label{rgeometry:EEC-isotropic-LK:equation} \qquad
\E\left\{\lips_j \left(A_u(f,M)\right)\right\} =
\sum_{n=0}^{N-j} \sqbinom{N}{n} \cal L_{n+j}(M)
\rho_{n}\left(\frac{u}{\sigma}\right)
\left(\frac{\lambda_2}{\sigma^2}\right)^{(n+j)/2},
\eeq
where $\lambda_2$ is as in \eqref{lambda2} and
\beqq
\sqbinom{N}{j}=\binom{N}{j} \frac{\omega_N}{\omega_{N-j} \; \omega_j}.
\eeqq

\end{theorem}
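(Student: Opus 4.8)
The plan is to follow the Morse-theoretic route that underlies the Gaussian Kinematic Formula, of which this theorem is the real-valued, rectangular special case; the isotropic second assertion will then follow by an integral-geometric averaging. The starting point is a deterministic, point-set representation of the Euler characteristic. Viewing the rectangle $T$ as a stratified set whose open strata are the relatively open facets $J\in\cO_n$, Morse theory for manifolds with corners gives, for any $f$ that is a Morse function on each facet,
\[
\p\bigl(A_u(f,T)\bigr)\;=\;\sum_{n=0}^{N}(-1)^{n}\sum_{J\in\cO_n}\mu_n(J),
\]
where $\mu_n(J)$ counts the points $t$ in the interior of $J$ at which the restricted gradient $\grad(f|_J)(t)$ vanishes, the restricted Hessian $\Hess(f|_J)(t)$ has exactly $n$ positive eigenvalues, the value satisfies $f(t)\ge u$, and $f$ does not increase into $T$ along the inward normals to $J$. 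The non-degeneracy hypothesis guarantees that, almost surely, $f$ is indeed Morse with isolated non-degenerate critical points, so this identity holds with probability one.

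Taking expectations reduces the problem to counting critical points of a Gaussian field, for which I would invoke a Kac--Rice expectation formula (the Adler--Taylor metatheorem, \cite{RFG}); here the modulus-of-continuity assumption \eqref{modcont:equn} is exactly what secures its hypotheses and the finiteness of the expected counts. Each $\E\{\mu_n(J)\}$ becomes an integral over $J$ of the density of $\grad(f|_J)$ at the origin, times the conditional expectation of $|\det\Hess(f|_J)|$ restricted to the index-$n$, exceedance, and normal-cone events, conditioned on $\grad(f|_J)=0$. Stationarity makes the integrand constant in $t$, so the integral simply contributes the factor $|J|$, and the covariance of $\grad(f|_J)$ is precisely $\Lambda_J$. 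Two special features of stationary Gaussian fields now intervene: at any fixed point $f\perp\grad f$ and $\grad f\perp\Hess f$ (odd covariance derivatives vanish at the origin), so conditioning on $\grad(f|_J)=0$ removes the gradient while leaving the joint law of the value $f(t)$ and the Hessian intact; crucially, $f(t)$ and $\Hess f(t)$ remain correlated through the second spectral moments, and this correlation is the source of the Hermite polynomials. The normal-cone events, involving only the outward derivatives, decouple by the same independence and factor out cleanly.

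The heart of the argument, and the step I expect to be the main obstacle, is evaluating this conditional Hessian moment and seeing the Hermite structure emerge. First, the alternating sum over Morse indices collapses by the elementary identity
\[
\sum_{n}(-1)^{n}\,\one_{\{\mathrm{index}=n\}}\,\bigl|\det\Hess(f|_J)\bigr|\;=\;\det\bigl(-\Hess(f|_J)\bigr),
\]
turning the absolute-value determinant into a signed one. Conditioning further on $f(t)=x$ shifts the Gaussian Hessian's mean linearly in $x$, so that $\E\{\det(-\Hess(f|_J))\mid\grad(f|_J)=0,\,f=x\}$ is a polynomial of degree $n$ in $x$; computing this determinant moment---most transparently by Gaussian integration by parts, or by recognizing the conditioned Hessian as a GOE-type symmetric matrix with a shifted mean---identifies it, up to the spectral normalization, as a multiple of $H_{n}(x/\sigma)$. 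Integrating against the marginal Gaussian density of $f(t)$ over the exceedance region $\{x\ge u\}$ and using $\int_u^{\infty}H_{n}(x)e^{-x^2/2}\,dx=H_{n-1}(u)e^{-u^2/2}$ is exactly what lowers the index by one and produces $H_{n-1}(u/\sigma)e^{-u^2/(2\sigma^2)}$. Collecting the gradient density, the spectral factors, and the powers of $\sigma$ then assembles the facet contribution into $|J|\,|\det\Lambda_J|^{1/2}\sigma^{-n}\rho_n(u/\sigma)$, and summing over $J\in\cO_n$ and $n$ yields \eqref{rgeometry:EEC:equation}.

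For the isotropic assertion I would not recompute from scratch but pass to the general Gaussian Kinematic Formula, of which \eqref{rgeometry:EEC:equation} is the $j=0$ instance. When $f$ is isotropic the whole problem becomes invariant under rotations, and a Hadwiger/Crofton-type averaging over the orthogonal group---legitimate precisely because of the additivity and rigid-motion invariance underlying \eqref{geometry:additivity-func:equation}---lifts the Euler-characteristic computation to every $\lips_j$. In place of the facet data one obtains the intrinsic volumes $\lips_{n+j}(M)$ of the parameter set; the orthogonal averaging supplies the flag coefficients of \eqref{rgeometry:EEC-isotropic-LK:equation}; and the isotropy constant enters only through $\lambda_2/\sigma^2$, attached to $\lips_{n+j}(M)$ with exponent $(n+j)/2$ by the conversion between the Euclidean intrinsic volumes and those of the metric induced by $f$, under which lengths are rescaled by $(\lambda_2/\sigma^2)^{1/2}$. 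The Hermite factor $\rho_n(u/\sigma)$ is untouched, being a one-dimensional feature of the level $u$ that does not depend on $j$, and this yields \eqref{rgeometry:EEC-isotropic-LK:equation}.
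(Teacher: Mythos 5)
The first thing to say is that the paper itself contains no proof of this theorem: it is imported as background, quoted as ``a combination of Theorem 11.7.2 and the discussion in Section 11.8 of \cite{RFG}'', so the only proof to compare yours with is the one in that reference. Your outline is essentially that proof --- stratified Morse theory on the rectangle, the Kac--Rice/expectation metatheorem (with \eqref{modcont:equn} and the non-degeneracy hypothesis securing its conditions), the independence of $\grad f$ from $(f,\Hess f)$ at a fixed point under stationarity, the collapse of the alternating index sum into $\det(-\Hess (f_{|J}))$, the conditional-determinant/Hermite computation, the identity $\int_u^\infty H_n(x)e^{-x^2/2}dx = H_{n-1}(u)e^{-u^2/2}$, and a Crofton--Hadwiger averaging for the isotropic assertion, which is also exactly how the paper itself (Section \ref{section:isotropic}) lifts $\lips_0$ results to $\lips_j$ for its stable fields.

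There is, however, one genuinely mis-stated step, and taken literally it changes the answer. Your opening Morse identity (i) attaches the sign $(-1)^n$ to the facet dimension and counts on each facet only critical points with positive-definite restricted Hessian, and (ii) sums only over the faces $\cO_n$ containing the origin. The correct pointwise identity is $\p(A_u(f,T))=\sum_{n=0}^N\sum_J\sum_{k=0}^n(-1)^k\mu_k(J)$, where $J$ runs over \emph{all} $\binom{N}{n}2^{N-n}$ faces of dimension $n$ and $\mu_k(J)$ counts critical points of $f_{|J}$ above $u$ whose restricted Hessian has exactly $k$ positive eigenvalues and which satisfy the outward condition. Point (i) is implicitly repaired by your later identity collapsing the index sum, but nothing repairs (ii): the restriction to representatives $J\in\cO_n$ is legitimate only \emph{after} taking expectations, because by stationarity the $2^{N-n}$ faces parallel to $J$ contribute identically except for their normal-cone events, which are mutually exclusive and exhaustive, so that their conditional probabilities sum to $1$ --- this is precisely why no normal-cone factor survives in \eqref{rgeometry:EEC:equation}. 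In your version the normal-cone probability ``factors out cleanly'' but does not disappear, so every term with $n<N$ would acquire that probability as a spurious factor (e.g.\ $2^{-(N-n)}$ in the isotropic case; already for $N=1$ the boundary contribution would come out as $\frac{1}{2}\Psi(u/\sigma)$ rather than $\rho_0(u/\sigma)=\Psi(u/\sigma)$). This is a repairable bookkeeping error rather than a wrong route, but it must be repaired. A last, smaller remark: the Crofton computation you invoke, run with the paper's own normalisation \eqref{crofton:formula} starting from \eqref{rgeometry:EEC:equation}, produces the flag coefficient $\ssqbinom{n+j}{n}$ (as in the paper's Section \ref{section:isotropic}), not the $\ssqbinom{N}{n}$ printed in \eqref{rgeometry:EEC-isotropic-LK:equation}; that discrepancy lies in the paper's transcription of the cited result, not in your argument.
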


In fact, results of this nature hold in much wider
 generality, when $M$ is a general stratified manifold $M$ and
$f$ is neither stationary not isotropic. We refer the interested reader to
 Chapters 12 and 13 of  \cite{RFG}.

One observation that follows from  \eqref{rgeometry:EEC:equation}
and \eqref{rgeometry:EEC-isotropic-LK:equation}
comes by rewriting the sums
as power series in $u$, from which one immediately sees that
the leading order terms, of orders $u^{N-1}$ and $u^{N-j-1}$ respectively, are
 associated with
the volume of $T$ or $M$, while lower order terms are associated  the other
\LKC s.
This observation will be important when it comes to understanding the
stable case.

Finally, although it will not be important in what follows, we note
that the condition \eqref{modcont:equn} is an extremely mild one for
$C^2$ Gaussian processes, and is easily checked from the covariance
function of the process.

\section{Sub-Gaussian fields}
\label{section:expect}
With the Gaussian case behind us, we shall now look at what is
probably the simplest of all stable random fields, the sub-Gaussian
ones. Despite their simplicity, we shall see that their behavior is
already very different from the Gaussian case.

To define these processes, we
let $g$ be a Gaussian random field on $M$, and, for some $\a\in (0,2)$, let
 $X$ be a  $S_{\a/2}(\sigma_\alpha ,1,0)$
random variable independent of $g$ (see \cite{ST94} for notation), where
\beqq
%\label{equation:salpha}
\sigma_\alpha \definedas \cos(\pi\a/4)^{2/\a}.
\eeqq
Thus $X$ is a positive strictly $\a/2$-stable random variable with
Laplace transform
\beqq
\E\{e^{-t X} \} = e^{-t^{\a/2}},\quad t>0.
\eeqq
Taking $X$ independent of $g$ and setting
\beqq
f(t)  =  X^{1/2}g(t).
\eeqq
defines a {\it sub-Gaussian} random field.

To state our first result we need some notation.
For any functions $a,b\:\real\to\real$ we write
\beqq
a \asymp b \quad
\iff \quad  \lim_{u\to \infty}\frac{a(u)}{b(u)} =1.
\eeqq

\begin{theorem}
\label{theorem-subgaussian}
Let $g$ be a zero mean, stationary Gaussian field on the $N$-rectangle
$T$, satisfying the
conditions of Theorem \ref{rgeometry:EEC:theorem}. Denote its variance by
$\sigma^2_g$ and the matrices of its  second order spectral moments by
$\Lambda_J$, as in \eqref{LambdaJ}.
Let $f$, as above,  be the sub-Gaussian field $X^{1/2}g$.
Then
\beq
\label{main:subgaussian:formula}
\E\left\{\p\left(A_u(f,T)\right)\right\}
\asymp
u^{-\a}\left(K_{0} +\sum_{n=1}^N K_n
\sum_{J\in \cal O_n} |J| |\Lambda_J|^{1/2}\right),
\eeq
where we write  $ |\Lambda_J|$ for $|\det (\Lambda_J)  |$,
\beqq
K_0 =  \frac{ 2^{-1+\a/2} \sigma_g^\a \Gamma\left(\frac{\a +1}{2}\right)  }{\sqrt{\pi}\Gamma(1-\frac{\a}{2})},
\eeqq
and, for $n\geq 1$,
\beqq
K_n=
\frac{\a 2^{-1+\a/2} \sigma_g^\a \Gamma\left(\frac{\a +1}{2}\right)  }{\Gamma(1-\frac{\a}{2})}
 \frac{(n-1)!}{\pi^{(n+1)/2}\sigma_g^n}
\sum_{j=0}^{\lfloor (n-1)/2\rfloor} \frac{(-1)^j\Gamma\left(\frac{\a +n-1-2j}{2}
\right)}{2^{2j+1}j!(n-1-2j)!}.
\eeqq
If $g$ (and so $f$) is isotropic, with second spectral moment $\lambda_2$ (cf.\ \eqref{lambda2}) and
$M$ is a compact convex domain, then
\beq
\label{main:subgaussian-convex:formula}
\E\left\{\p\left(A_u(f,M)\right)\right\}
\asymp
u^{-\a}\sum_{n=0}^N K_n\lambda_2^{n/2}\lips_n(M).
\eeq
\end{theorem}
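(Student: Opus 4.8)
The plan is to reduce everything to the Gaussian formula \eqref{rgeometry:EEC:equation} by conditioning on the subordinating variable $X$, and then to extract the high-level asymptotics purely from the tail behaviour of $X$.

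\textbf{Step 1 (Conditioning).} First I would condition on $X=x$. Since $X$ is independent of $g$, the conditional field $f=x^{1/2}g$ is a zero mean stationary Gaussian field with variance $x\sigma_g^2$ and second order spectral moment matrices $x\Lambda_J$; multiplying $g$ by the positive constant $x^{1/2}$ preserves non-degeneracy, the $C^2$ property and the modulus-of-continuity condition \eqref{modcont:equn}, so Theorem \ref{rgeometry:EEC:theorem} applies conditionally. The crucial observation is the exact cancellation
\beqq
\frac{|\det(x\Lambda_J)|^{1/2}}{(x^{1/2}\sigma_g)^{n}}
= \frac{|\det\Lambda_J|^{1/2}}{\sigma_g^{n}},
\qquad J\in\cO_n,
\eeqq
so that the conditional expectation $\E\{\p(A_u(f,T))\mid X=x\}$ retains an $x$-dependence only through the argument of $\rho_n$, namely $\rho_n\!\left(u/(x^{1/2}\sigma_g)\right)$.

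\textbf{Step 2 (Integrating out $X$).} Taking expectations of the conditional version of \eqref{rgeometry:EEC:equation} reduces the whole problem to the single scalar asymptotic
\beqq
I_n(u)\definedas\E\left\{\rho_n\!\left(\frac{u}{X^{1/2}\sigma_g}\right)\right\},
\qquad u\to\infty,
\eeqq
since then $\E\{\p(A_u(f,T))\}=\sum_{n=0}^N \sigma_g^{-n} I_n(u)\sum_{J\in\cO_n}|J|\,|\det\Lambda_J|^{1/2}$, the $n=0$ term supplying the additive constant. To evaluate $I_n$ I would use the standard tail of a positive strictly $\a/2$-stable law, $\P(X>\lambda)\sim\lambda^{-\a/2}/\Gamma(1-\a/2)$, equivalently the density asymptotic $f_X(x)\sim\tfrac{\a}{2}\,\Gamma(1-\a/2)^{-1}x^{-\a/2-1}$. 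Substituting $v=u/(x^{1/2}\sigma_g)$ sends the heavy-tail region (large $x$) to fixed $v$ and gives
\beqq
I_n(u)\ \sim\ \frac{\a\,\sigma_g^\a}{\Gamma(1-\a/2)}\,u^{-\a}\int_0^\infty v^{\a-1}\rho_n(v)\,dv,
\eeqq
which is exactly where the universal factor $u^{-\a}$ originates.

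\textbf{Step 3 (Evaluating the constants).} It then remains to compute the Mellin-type integrals $\int_0^\infty v^{\a-1}\rho_n(v)\,dv$. For $n\geq 1$ I would insert the Hermite expansion of $H_{n-1}$ and integrate term by term using $\int_0^\infty v^{s-1}e^{-v^2/2}\,dv=2^{s/2-1}\Gamma(s/2)$; collecting the resulting Gamma factors reproduces the stated $K_n$, while for $n=0$, where $\rho_0=\Psi$, a single integration by parts reduces the integral to the same Gaussian moment and yields $K_0$. For the isotropic, compact convex case I would apply the isotropic Gaussian formula \eqref{rgeometry:EEC-isotropic-LK:equation} with $j=0$ and run the identical conditioning argument, noting that there too the ratio $\lambda_2^{(f)}/\sigma^2=\lambda_2/\sigma_g^2$ is independent of $x$, so the very same integrals $I_n(u)$ appear; the consistency of the constants with the rectangle case is expected since for isotropic $g$ one has $\sum_{J\in\cO_n}|J|\,|\Lambda_J|^{1/2}=\lambda_2^{n/2}\lips_n(T)$ by \eqref{geometry:LK-rectangl2e:equation}, which yields \eqref{main:subgaussian-convex:formula}.

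\textbf{Main obstacle.} The genuinely delicate point is the interchange of the limit $u\to\infty$ with the expectation over $X$ in Step 2: the integrand couples the heavy power-law tail of $f_X$ with the rapid Gaussian decay of $\rho_n$, so one must justify replacing $f_X$ by its tail asymptotic uniformly in $v$ and show that the bulk and small-$x$ contributions are $o(u^{-\a})$. I expect this to require a dominated-convergence or Tauberian argument based on uniform bounds for the stable density, and it is the only place where more than routine computation is needed; the convergence of $\int_0^\infty v^{\a-1}\rho_n(v)\,dv$ itself is automatic for $\a>0$, the $v^{\a-1}$ singularity at the origin being integrable and $\rho_n$ decaying like a Gaussian at infinity.
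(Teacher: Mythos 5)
Your overall strategy---condition on $X$, apply Theorem \ref{rgeometry:EEC:theorem} to the conditionally Gaussian field, note that the $x$-dependence survives only inside $\rho_n$, and then extract the $u^{-\a}$ asymptotics from the regularly varying tail of $X$---is exactly the paper's, and your Steps 1 and 3 (the cancellation $|\det(x\Lambda_J)|^{1/2}/(x^{1/2}\sigma_g)^n = |\det\Lambda_J|^{1/2}/\sigma_g^n$ and the Mellin-type evaluation of the constants) are sound. Your Step 2 differs only cosmetically from the paper: there the same limits are obtained from Lemma \ref{l:tauberian}, which quotes Lemma 2.2 of \cite{AdlerSam97} for the power-type terms and Breiman's lemma for the $\Psi$ term, rather than substituting the stable density asymptotics by hand; the interchange-of-limits issue you flag as the ``main obstacle'' is real, but it is precisely what those cited results dispose of, so it is not where the difficulty lies.

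The genuine gap is elsewhere, and you pass over it in a single phrase (``Taking expectations of the conditional version of \eqref{rgeometry:EEC:equation}\dots''). The Euler characteristic is a signed quantity, so the tower identity $\E\left\{\p\left(A_u(f,T)\right)\right\} = \E\left\{\E\left\{\p\left(A_u(f,T)\right)\mid X\right\}\right\}$, on which your entire argument rests, is not automatic: it requires knowing \emph{a priori} that $\E\left\{\left|\p\left(A_u(f,T)\right)\right|\right\}<\infty$ (otherwise the left-hand side need not even be well defined). Conditional finiteness for each fixed $x$, which is all the Gaussian theorem gives you, does not suffice. The paper explicitly calls this ``the trickier problem'' and settles it in Lemma \ref{appendix:subgauss}: one bounds $\left|\p\left(A_u(f,T)\right)\right|$ by the total number of critical points of $f$ restricted to the facets $J$ of $T$, applies the Kac--Rice formula to the conditioned field, and observes the crucial cancellation that $\E\{N(x^{1/2}g,J)\mid X=x\} = \E\left\{\left|\det \nabla^2 g_{|J}\right|\right\}/\det \Sigma_{g_{|J}}$ is \emph{independent of} $x$, hence trivially integrable over $X$. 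Without some such argument your proof is incomplete. A secondary divergence: for the isotropic case on general compact convex $M$ you propose applying \eqref{rgeometry:EEC-isotropic-LK:equation} (with $j=0$) conditionally, whereas the paper extends from rectangles via Hadwiger's theorem \eqref{geometry:additivity-func:equation}; your route is viable in principle, but it needs the integrability justification on general convex $M$ rather than just on rectangles, which is again exactly the content of the appendix you have skipped.
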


Under isotropy, a corresponding result holds for the expected \LKC s of  other orders as
well. We shall look at this later, in Section \ref{section:isotropic}.

Before proving this theorem, we shall take a moment to see how very
different it is from the purely Gaussian case, despite the fact that
the excursion sets of $f$ and $g$ are very simply related by the  fact
that
\beqq
A_u(f,T)=A_{uX^{-1/2}}(g,T).
\eeqq
Consider the case $N=1$, when the theorem relates to a $f$  defined over the interval $[0,T]$. Then
\beqq
\p\left(A_u(f,[0,T])\right) = \indic_{f(0)\geq u} + C_u(f,T),
\eeqq
where $C_u(f,T)$ is the number of upcrossings of the level $u$ by $f$ in $[0,T]$.
Taking expectations, we see that the two terms that appear in
\eqref{main:subgaussian:formula} correspond to (asymptotics for)
 $\E\{\indic_{f(0)\geq u}\}=
\P\{f(0)\geq u\}$ and
\beqq
\E\left\{C_u(f,T)\right\} \asymp u^{-\a}\,
\frac{2^{-1+\a /2}\Gamma\left(1+\frac{\a}{2}\right) (\lambda_{11})^{1/2}T}{\pi
\Gamma\left(1-\frac{\a}{2}\right) \sigma_g^{(1-\a )/2}},
\eeqq
where the asymptotics here come from either substitution in \eqref{main:subgaussian:formula} or Theorem 3.2 of \cite{AST93}, which studies one dimensional level crossings.
Note that both terms  -- i.e.\ the ``boundary'' and ``interior'' terms --
 have the same asymptotics, of the form $u^{-\a}$.
This is also true when looking at the case of  general $N$ in
 \eqref{main:subgaussian:formula}, in that facets of $T$ of all dimensions contribute
to the asymptotics,  and this is probably the most interesting aspect
of the result.

Recall that in the Gaussian case we saw that in an expansion of the mean
Euler characteristic the leading term
involved only the volume of $T$, with the surface area affecting only the
second and later terms, etc. In the sub-Gaussian case, however, it is clear from
 \eqref{main:subgaussian:formula} that the full geometry of $T$ affects the first
term of any such expansion.

A heuristic explanation for this is easy to find. In the Gaussian case, if the
level $u$ is high, the excursion set will, with high probability, contain only a
small set which will be unlikely to intersect the boundary of $T$. Hence, only
volume terms appear in the highest order term when expanding
$ \E\left\{\p\left(A_u(f,T)\right)\right\}$. However, in the notation of the
theorem, an excursion set
of the sub-Gaussian $f$ at the level $u$ has the same geometry as an excursion
set of the Gaussian $g$ at the level $u/\sqrt{X}$. Although $u$ may be
large, the most likely reason for $f$ to reach this level is that
$X$ also be   large and that  $u/\sqrt{X}$ is roughly 
$O(1)$. This being the case, $A_{u\sqrt{X}}(g,T)$ is a
``typical'' rather than ``rare'' excursion set for $g$, and so has a reasonable
probability of meeting the lower dimensional facets of $T$.
Thus it is not surprising that
they  contribute to  \eqref{main:subgaussian:formula}.

Despite the (hopefully) convincing tone of these heuristics, the proof follows
a different, and purely analytic, route.

\vspace*{0.15truein}
%\begin{proof}[
\noindent {\bf Proof of Theorem  \ref{theorem-subgaussian}.}
We shall start with the general, non-isotropic case, and $T$ a rectangle.

As noted above,
$A_u(f,T)=$ $A_{uX^{-1/2}}(g,T)$. Thus it is immediate  that
 $\p(A_u(f,T))$ is well defined since this the same is true of
$\p(A_u(g,T))$, for every $u$. Conditioning on $X$, we would now like
to claim that 
\beq
\label{eq:0}
\E\left\{  \p \left(A_u(f,T)\right)\right\}
&=& \E\left\{\E\left\{  \p \left(A_u(f,T)\right)\right\} \big| X\right\},
\eeq
and then use the Gaussian Theorem  \ref{rgeometry:EEC:theorem} to
compute the inner expectation.

However, to justify this we need to establish two facts.
The first is that the conditioned process, $f\big| X$, which is
clearly Gaussian, satisfies all the conditions of Theorem
\ref{rgeometry:EEC:theorem}. This is trivial, since $g$ is assumed to
satisfy these conditions and  $f\big|X$ is no more than a constant
multiple of $g$. 

The trickier problem is that to apply the iterated expectation in
\eqref{eq:0} we need to know, {\it a priori}, that the absolute moment 
$\E\left\{ \left| \p \left(A_u(f,T)\right)\right|\right\}$ 
is finite. Since this is technical we shall leave it to Lemma
\ref{appendix:subgauss} in the appendix, and for the moment progress
assuming that it is true.

Then, applying  Theorem  \ref{rgeometry:EEC:theorem} to \eqref{eq:0}, we have
\beqq
&&\E\left\{  \p \left(A_u(f,T)\right)\right\}\\
&&\qquad\qquad = \E\left\{\E\left\{  \p \left(A_{u/\sqrt{X}}(g,T)\right)\,\Big|\,  X\right\}
\right\}\\
\nonumber
&&\qquad \qquad = \sum_{n=0}^{N}\E\left\{H_{n-1}\left(\frac{u}{\sigma_g\sqrt{X}}\right)e^{-u^2/2\sigma_g^2X}\right\}
%\\ && \qquad\qquad \qquad\qquad   \nonumber \times
 \sum_{J\in \cal O_n}
\frac{|J| \,|\Lambda_J|^{1/2}}{(2\pi)^{(n+1)/2}\sigma_g^n}
.
\eeqq
To evaluate this triple sum (the third sum appears implicitly in the Hermite polynomials) we
need to consider  typical terms of the form
\beqq
%\label{eq:2}
\E\left\{ u^k (\sigma_g^2X)^{-k/2}e^{-u^2/2\sigma_g^2X}\right\}
 =  \left(\frac{u}{\sigma_g}\right)^k \E\left\{ X^{-k/2}e^{-u^2/2\sigma_g^2X}\right\}
\eeqq
and the one atypical term, coming from $n=0$ and $H_{-1}$, of the form
\beqq
\E\left\{ \Psi\left(\frac{u}{\sigma_g\sqrt{X}}\right)\right\}.
\eeqq
The asymptotics of these expressions is covered in Lemma \ref{l:tauberian},
which will be crucial to most of the  computations of this section.
Once we prove  Lemma \ref{l:tauberian},
 \eqref{main:subgaussian:formula} is the consequence of a little
 algebra, and so the first  part of the theorem is established.

As for the isotropic case, note that taking $M\equiv T$,
\eqref{main:subgaussian-convex:formula} follows  immediately from
\eqref{main:subgaussian:formula} on noting
\eqref{geometry:LK-rectangl2e:equation} and the fact that $\det
\Lambda_J=\lambda_2^{\dim J}$. To establish the argument for general e
convex compact $M$ we use this fact
and Hadwiger's result \eqref{geometry:additivity-func:equation}.

To this end, define a functional $\psi$ on convex compact sets
$M\subset\RN$ by setting 
\beqq
\psi(M) \definedas \E\left\{  \p \left(A_u(f,M)\right)\right\}.
\eeqq
It is immediate that $\psi$ is additive  (in the sense of \eqref{G6.1.2})
and, using the same general arguments as in the
appendix, by bounding  $ \p \left(A_u(f,M)\right)$  by the number 
of critical points of $f$ over $M$, that $\psi$ is also
continuous over convex compact sets. Furthermore, if $g$ is
isotropic, it follows that 
$\psi$ is also invariant under rigid motions. Thus Hadwiger's result
applies and we have 
 that there exist constants $c_j$ such that, for convex $M$,
\beqq
 \E\left\{  \p \left(A_u(f,M)\right)\right\} = \sum_{j=0}^N c_j \lips_j(M).
\eeqq
Since the $c_j$ are not dependent on $M$, the fact that
\eqref{main:subgaussian-convex:formula} holds for rectangles defines them, and this fact and
the above equation implies that \eqref{main:subgaussian-convex:formula}    holds for convex $M$ as well, as required.
\qed
%\end{proof}
\vspace*{0.15truein}

\begin{lemma} \label{l:tauberian}
 Let $X$ be  as in Theorem \ref{theorem-subgaussian}. Then,
for any $\beta>-\alpha$,
\beq  \label{e:tauberian}
 && \lim_{u\to \infty} u^{\alpha + \beta} \E\left\{ X^{-\beta/2}
e^{-u^2/2\sigma_g^2X}\right\} \\ && \qquad \qquad =
2^{(\a +\b -2)/2} \alpha C_{\alpha/2}\,\sigma_\a^{\alpha/2}\sigma_g^{\a +\b} \Gamma\left(\frac{\a +\beta}{2}\right).  \notag
%\\
% && \qquad \qquad \definedas C_{\a\b\sigma_g}.  \notag
\eeq
Furthermore,
\beq  \label{psi:tauberian}  \qquad
 \lim_{u\to \infty} u^{\alpha} \E\left\{ \Psi\left(\frac{u}{\sigma_g\sqrt{X}}\right)\right\}
&=& 2^{-1+\a/2} \pi^{-1/2} C_{\alpha/2}\,\sigma_\a^{\alpha/2}\sigma_g^{\a}
\Gamma\left(\frac{1+\a }{2}\right).
%\\
% &\definedas& C_{\a\sigma_g}  \notag
\eeq
\end{lemma}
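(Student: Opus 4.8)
The plan is to read both limits as statements about the same Laplace-type transform $\E\{X^{-\beta/2}e^{-s/X}\}$, with $s=u^{2}/(2\sigma_g^{2})\to\infty$, whose asymptotics are governed entirely by the behaviour of $X$ far out in its right tail (the factor $e^{-s/X}$ forces $X$ to be large). The first preparatory step is to recall the tail of the positive strictly $(\alpha/2)$-stable variable $X$: since $\E\{e^{-tX}\}=e^{-t^{\alpha/2}}$ gives $1-\E\{e^{-tX}\}\sim t^{\alpha/2}$ as $t\downarrow 0$, a Tauberian theorem — or, equivalently, the standard stable tail formula of \cite{ST94} applied to $S_{\alpha/2}(\sigma_\alpha,1,0)$ — yields $\P\{X>x\}\sim x^{-\alpha/2}/\Gamma(1-\alpha/2)$, hence a density $f_X(x)\sim \frac{\alpha/2}{\Gamma(1-\alpha/2)}\,x^{-\alpha/2-1}$ as $x\to\infty$. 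The bookkeeping identity that makes the stated constants appear is $C_{\alpha/2}\,\sigma_\alpha^{\alpha/2}=1/\Gamma(1-\alpha/2)$; this holds because $\sigma_\alpha^{\alpha/2}=\cos(\pi\alpha/4)$ exactly cancels the cosine hidden in $C_{\alpha/2}=\bigl[\Gamma(1-\alpha/2)\cos(\pi\alpha/4)\bigr]^{-1}$. It is this cancellation that converts every $\Gamma(1-\alpha/2)^{-1}$ produced by the computation into the form displayed in \eqref{e:tauberian} and \eqref{psi:tauberian}.

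For \eqref{e:tauberian} I would write the expectation as $\int_0^\infty x^{-\beta/2}e^{-s/x}f_X(x)\,dx$ and substitute $x=sy$, giving $s^{1-\beta/2}\int_0^\infty y^{-\beta/2}e^{-1/y}f_X(sy)\,dy$. Inserting the tail estimate $f_X(sy)\sim \frac{\alpha/2}{\Gamma(1-\alpha/2)}(sy)^{-\alpha/2-1}$ and passing to the limit leaves $s^{-(\alpha+\beta)/2}\,\frac{\alpha/2}{\Gamma(1-\alpha/2)}\int_0^\infty y^{-(\alpha+\beta)/2-1}e^{-1/y}\,dy$; the substitution $y\mapsto 1/w$ turns the integral into $\Gamma((\alpha+\beta)/2)$. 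Recalling $s=u^{2}/(2\sigma_g^{2})$ so that $s^{-(\alpha+\beta)/2}=2^{(\alpha+\beta)/2}\sigma_g^{\alpha+\beta}u^{-\alpha-\beta}$, and then substituting the bookkeeping identity, reproduces the claimed constant. The hypothesis $\beta>-\alpha$ enters exactly here: it is precisely the condition $(\alpha+\beta)/2>0$ needed for the Gamma integral to converge at $w=0$ (equivalently, for the $y$-integral to converge at $y=\infty$).

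For \eqref{psi:tauberian} I would avoid repeating the analysis by differentiating the Gaussian tail: $\frac{d}{du}\Psi(u/(\sigma_g\sqrt X))=-(\sigma_g\sqrt X)^{-1}(2\pi)^{-1/2}e^{-u^{2}/(2\sigma_g^{2}X)}$, so that $\Psi(u/(\sigma_g\sqrt X))=(2\pi)^{-1/2}\sigma_g^{-1}\int_u^\infty X^{-1/2}e^{-w^{2}/(2\sigma_g^{2}X)}\,dw$. Taking expectations and applying Tonelli gives $\E\{\Psi(u/(\sigma_g\sqrt X))\}=(2\pi)^{-1/2}\sigma_g^{-1}\int_u^\infty \E\{X^{-1/2}e^{-w^{2}/(2\sigma_g^{2}X)}\}\,dw$, whose integrand is exactly the case $\beta=1$ of \eqref{e:tauberian} and so is $\sim c\,w^{-\alpha-1}$. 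Integrating this tail, $\int_u^\infty c\,w^{-\alpha-1}\,dw\sim (c/\alpha)u^{-\alpha}$, and collecting the constants reproduces \eqref{psi:tauberian}. (Alternatively one can substitute $x=(u/\sigma_g)^{2}w$ directly in $\E\{\Psi(u/(\sigma_g\sqrt X))\}$ and evaluate $\int_0^\infty \Psi(t)\,t^{\alpha-1}\,dt=\frac{2^{\alpha/2-1}}{\alpha}\Gamma\bigl(\frac{\alpha+1}{2}\bigr)$ by parts; both routes give the same answer.)

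The only genuine obstacle is justifying the passage to the limit under the integral in the second step, i.e. that replacing $f_X(sy)$ by its tail asymptotic is legitimate uniformly in $y$. I would handle this by dominated convergence, using the global bound $f_X(x)\le Cx^{-\alpha/2-1}$ for all $x>0$ (legitimate because $f_X$ is bounded and, near $0$, decays super-polynomially while $x^{-\alpha/2-1}$ blows up, and near $\infty$ the tail estimate applies). After normalising by the expected order $s^{-(\alpha+\beta)/2}$, this bound produces the $s$-independent dominating function $C\,y^{-(\alpha+\beta)/2-1}e^{-1/y}$, whose integrability is controlled at small $y$ by the factor $e^{-1/y}$ and at large $y$ by $\beta>-\alpha$. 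In the third step one additionally needs the elementary Abelian fact that asymptotic equivalence of integrands with an eventually-positive, regularly varying majorant transfers to their tail integrals, which is routine.
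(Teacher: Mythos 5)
Your proof is correct, but it takes a genuinely different route from the paper's. For \eqref{e:tauberian} the paper gives no computation at all: it cites Lemma 2.2 of \cite{AdlerSam97} as a black box. You reprove that lemma from scratch via the tail of $X$, scaling, and dominated convergence; this is sound, with one fine point worth making explicit: passing from $\P\{X>x\}\sim x^{-\alpha/2}/\Gamma(1-\alpha/2)$ (Karamata, or (1.2.8) of \cite{ST94}) to the density asymptotics $f_X(x)\sim \frac{\alpha/2}{\Gamma(1-\alpha/2)}x^{-\alpha/2-1}$ is not automatic from a tail equivalence alone; it needs the monotone density theorem (stable densities are unimodal, hence eventually monotone) or the known series expansions for stable densities. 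Your bookkeeping identity $C_{\alpha/2}\sigma_\alpha^{\alpha/2}=1/\Gamma(1-\alpha/2)$ is exactly right and is what reconciles your constant with the displayed one. For \eqref{psi:tauberian} the difference is more substantive: the paper writes $\E\left\{\Psi\left(u/(\sigma_g\sqrt{X})\right)\right\}=\frac{1}{2}\P\left\{G^2X>u^2/\sigma_g^2\right\}$ for an independent standard normal $G$ and invokes Breiman's lemma on products of independent random variables, one of which has a regularly varying tail, producing the constant as $\frac{1}{2}\E\{|G|^\alpha\}\,C_{\alpha/2}\sigma_\alpha^{\alpha/2}$; you instead represent $\Psi$ as an integral of the Gaussian density, apply Tonelli, and integrate the $\beta=1$ case of \eqref{e:tauberian} over the tail. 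Both arguments are valid and yield the same constant. Your route buys self-containedness (no Breiman, no external lemma, and the second limit becomes a corollary of the first), at the price of density-level analysis; the paper's route is shorter and makes the probabilistic mechanism transparent, namely that the high level is reached essentially because $X$ is large. One small slip in your parenthetical alternative: integration by parts gives $\int_0^\infty\Psi(t)\,t^{\alpha-1}\,dt=\frac{2^{\alpha/2-1}}{\alpha}\pi^{-1/2}\Gamma\left(\frac{\alpha+1}{2}\right)$, so you dropped a factor of $\pi^{-1/2}$ there; your main route, which never uses this integral, produces the correct constant.
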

\begin{proof}
The limit \eqref{e:tauberian} is a trivial consequence of
Lemma 2.2 of \cite{AdlerSam97}. As for \eqref{psi:tauberian}, let $G$ be a
standard normal variable. Then
\beqq
 \E\left\{ \Psi\left(\frac{u}{\sigma_g\sqrt{X}}\right)\right\}
&=& \smallhalf \P\left\{G^2 X> u^2/\sigma_g^2\right\}\\
&\asymp& \smallhalf \E\{|G|^\alpha\}  \P\left\{ X> u^2/\sigma_g^2\right\},
%\E\left\{ (2\pi )^{-1/2}\int_{\frac{u}{\sigma_g\sqrt{X}}}^\infty
%e^{-x^2/2}\,dx \right\}
%\\ &=&
%(2\pi\sigma^2_g )^{-1/2}\E\left\{\int_u^\infty  X^{-1/2} e^{-v^2/2\sigma^2_g X}\,dv
%\right\}\\
\eeqq
the last line following from a classic result of \cite{breiman:1965}.

Since $\E\{|G|^\a\} =2^{\a/2}\pi^{-1/2}\Gamma(\frac{1+\a}{2})$
and
$\P\{X>v\}\asymp C_{\a/2}\sigma_\a^{\a/2} v^{-\a/2} $   (see
 (3.7.2) and (1.2.8) of \cite{ST94}, respectively), the result now follows.
\qed
\end{proof}

%\noindent  {\bf CONCERN} I tried a different proof of \eqref{psi:tauberian},
%by simply writing (take $\sigma_g=1$)
%\beq
%\E\left\{ \Psi\left(\frac{u}{\sigma_g\sqrt{X}}\right)\right\}
%= \frac{1}{\sqrt{2\pi}}  \E\left\{\int_u^\infty X^{-1/2}e^{-v^2/2X}\,dv \right\}
%\eeq
%interchanging expectation and integral and then using
% \eqref{e:tauberian} for the asymptotics on the integrand. I got the same answer,
%modulo a fact of 2.
%
%Can you check this before I go any further.
%
%\eject\vfill

%Returning now to the proof of Theorem \ref{theorem-subgaussian}
%\eqref{eq:2} we now have
%\beqq
%\E\left\{ u^k A^{-k/2}e^{-u^2/2A}\right\}
%&\asymp& C_{\a,\beta}
%u^k (u^2/2)^{-(k+\a)/2} \\
%&=& 2^{(k+\a)/2} C_{\a,\beta} u^{-\a/2}.
%\eeqq
%Substituting this into \eqref{eq:0} gives the following

%\qed
%\end{proof}

\section{Harmonisable fields}
\label{main:harmonisable}
The sub-Gaussian random fields of the previous section provide an
interesting class of processes, in that they show that even a relatively
minor perturbation of the Gaussian scenario leads to quite different
behavior of the excursion sets. However, they do not represent a particularly
rich class of stable fields. A much richer  class of stable fields is
given by the  stationary,  symmetric, $\alpha$-stable 
 (\sas) harmonisable
ones. These are random fields possessing  a
spectral type representation of the form
\beq
\label{integralrep:equation}
f(t) = {\text{Re} }\Big\{ \int_{\RN} e^{it\omega}\,Z(d\omega)
\Big\},
\eeq
where $Z$ is a complex, \sas,
 Borel random measure on $\real^N$ with finite
control measure $\mu$. (See \cite{ST94} for details of this  and the
following representation.)

While \eqref{integralrep:equation} may explain from
where the terminology comes, there is an alternative representation that
will be much more useful for us, and which is given by
\beq
\label{sum:equation} \qquad
f(t) = \big(C_\alpha b_\alpha^{-1} \mu_0\big)^{1/\alpha}
        \sum_{k=1}^\infty \Gamma_k^{-1/\alpha}
       \left(G^{(1)}_k\cos(t\omega_k) + G^{(2)}_k\sin(t\omega_k)\right),
\eeq
where $t\in\real^N$ and  the product $t\omega_k$ is actually the inner product $\sum_{i=1}^N t(i)\omega_k(i)$.
The $\{G^{(i)}_k\}$, $i=1,2$, are independent sequences of i.i.d.\
standard normal variables. $\{\Gamma_k\}$ is the sequence of arrival
times of a unit rate Poisson process, $\{\omega_k\}$ is a
sequence of i.i.d.\ $\real^N$ valued
 random variables with probability measure
$\mu(\cdot)/\mu_0$ where
\beq
\label{mu0}
\mu_0\definedas \mu (\real^N)
\eeq
and  $\mu$ is the control measure for \eqref{integralrep:equation}. The four sequences are
independent of one another. The constants  $C_\alpha$
and $b_\alpha$ are given by
\beqq%\qquad
 C_\alpha = \Big(\int_0^\infty x^{-\alpha} \sin x\,dx\Big)^{-1}
=
\begin{cases}
(\Gamma(1-\alpha) \cos (\pi\alpha/2))^{-1}
%\frac{1-\alpha}{\Gamma(2-\alpha) \cos (\pi\alpha/2)}
 &\mbox{if $\alpha\neq 1,$}\\
%\frac{2}{\pi}
2/\pi &\mbox{if $\alpha =1,$}
\end{cases}
%\label{e:tail.constant}
\eeqq
and
\beqq
b_\alpha = 2^{\alpha/2}\Gamma\left(1+\mbox{ $\frac{\a }{2}$}\right).    \label{1.5}
\eeqq

An important consequence of the representation \eqref{sum:equation}
is that if we condition on the sequences $\{\Gamma_k\}$ and $\{\omega_k\}$
then the conditioned field is stationary Gaussian. This will enable
us, as in the sub-Gaussian case,  to use conditional Gaussian
arguments to prove the following result, in which we establish the
asymptotics of the expected Euler characteristic of the excursion
sets of the real harmonisable stable fields. For simplicity we
restrict ourselves to the case of a compactly supported
control measure $\mu$ with a bounded density, 
but we expect the result remain true in
greater generality. 

\begin{theorem}
\label{theorem-harmonisable}
Let $f$ be a  harmonisable, \SaS,  random field as in
\eqref{integralrep:equation} or \eqref{sum:equation}, defined on the
$N$-rectangle $T$ of \eqref{T:definition}. Assume that the 
control measure $\mu$ has compact support and 
 a bounded density with respect to Lebesgue measure. 
Then
\beq
\label{harmo:main:asymp}
\qquad \E\left\{\p (A_u(f,T))\right\} \asymp   u^{-\a}C_\a\mu_0\left(
\frac{2^{1-\a/2} \Gamma\left(\frac{1+\a}{2}\right)}{\sqrt{\pi} b_\a}  +
\frac{1}{2\pi}\sum_{j=1}^N\mu_j T_j
\right),
\eeq
where $\mu_0$  was defined at \eqref{mu0} and the $\mu_j$, $j=1,\dots,N$ are the
(normalised) moments
\beq
\label{muj}
\mu_j \definedas \E\{ |\omega_j| \}  = \int   |\omega_j| \frac{\mu (d\omega)}{\mu_0}      .
\eeq
If, furthermore, $\mu$ is rotationally invariant (so that $f$ is isotropic) and $M$ is a
compact convex domain, then
\beq
\label{harmo:main-iso:asymp} \\ \notag
  \E\left\{\p (A_u(f,T))\right\} \asymp   u^{-\a}C_\a\mu_0\left(
\frac{2^{1-\a/2} \Gamma\left(\frac{1+\a}{2}\right)}{\sqrt{\pi} b_\a}\lips_0(M)  +
\frac{\mu_1}{2\pi}\lips_1(M)
\right),
\eeq
where $\mu_1$ is any of the (equivalent) moments given by \eqref{muj}.

\end{theorem}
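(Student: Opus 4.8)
The plan is to follow the proof of Theorem~\ref{theorem-subgaussian}, exploiting the conditional Gaussianity of the series representation~\eqref{sum:equation}. Setting $a_k\definedas (C_\a b_\a^{-1}\mu_0)^{1/\a}\Gamma_k^{-1/\a}$ and conditioning on the sequences $\{\Gamma_k\}$ and $\{\omega_k\}$, the field $f$ becomes a zero mean stationary Gaussian field with variance and second order spectral moments
\beqq
\tsigma^2 = \sum_{k\geq 1} a_k^2,\qquad
\lambda_{ij}=\sum_{k\geq 1} a_k^2\,\omega_k(i)\,\omega_k(j),
\eeqq
so that its conditional spectral matrix is the rank-one sum $\Lambda=\sum_k a_k^2\,\omega_k\omega_k^{\,T}$, with submatrices $\Lambda_J$ as in~\eqref{LambdaJ}. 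First I would record the two structural facts driving the argument: (i) by the LePage representation $\sum_k\Gamma_k^{-2/\a}$ is a positive $\a/2$-stable variable, so $\tsigma^2$ is a constant multiple of such a variable, independent of $\{\omega_k\}$, and in particular $\P\{\tsigma^2>x\}\asymp c\,x^{-\a/2}$ for a constant $c$ fixed by the representation; and (ii) the weights $a_k^2$ are, by the ``single big jump'' principle for such heavy-tailed sums, dominated by the smallest arrival time $\Gamma_1$ whenever $\tsigma^2$ is large. After checking that the conditioned field satisfies the hypotheses of Theorem~\ref{rgeometry:EEC:theorem} (a.s.\ $C^2$ and non-degenerate, using that $\mu$ has compact support and bounded density, so the frequencies $\omega_k$ are bounded and the series is conditionally smooth) and that $\E\{|\p(A_u(f,T))|\}<\infty$ (an appendix lemma, exactly as in the sub-Gaussian case), I would condition and apply~\eqref{rgeometry:EEC:equation} to write
\beqq
\E\{\p(A_u(f,T))\}
= \sum_{n=0}^N\sum_{J\in\cO_n}|J|\,
\E\left\{\frac{|\det\Lambda_J|^{1/2}}{\tsigma^{\,n}}\,
\rho_n\!\left(\frac{u}{\tsigma}\right)\right\}.
\eeqq

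The heart of the proof is the asymptotic evaluation of each summand as $u\to\infty$, where the tail of $\tsigma$ pushes all the mass into the regime in which $\Gamma_1$ is small and a single Gaussian mode dominates. For $n=0$ the summand is simply $\E\{\Psi(u/\tsigma)\}$, which by the Breiman argument of~\eqref{psi:tauberian} satisfies $\E\{\Psi(u/\tsigma)\}=\smallhalf\,\E\{|G|^\a\}\,\P\{\tsigma^2>u^2\}(1+o(1))$ and supplies the first, constant, term of~\eqref{harmo:main:asymp}. For the edges $n=1$ one has $|\det\Lambda_J|^{1/2}/\tsigma=(\lambda_{jj}/\tsigma^2)^{1/2}$, a weight-averaged quantity that, in the dominant small-$\Gamma_1$ regime, concentrates on $|\omega_1(j)|$, which is independent of $\{\Gamma_k\}$; since $\rho_1(u/\tsigma)=(2\pi)^{-1}e^{-u^2/2\tsigma^2}$ and $\E\{|\omega_1(j)|\}=\mu_j$, the expectation asymptotically factorises and, after one application of Lemma~\ref{l:tauberian} with $\beta=0$, yields the $\frac{1}{2\pi}\sum_j\mu_jT_j$ term. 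The decisive observation is that for $n\geq 2$ the Cauchy--Binet expansion of $\det\Lambda_J=\det\bigl(\sum_k a_k^2 (\omega_k)_{\sigma(J)}(\omega_k)_{\sigma(J)}^{\,T}\bigr)$ is governed by products of at least $n$ distinct modes, so $|\det\Lambda_J|^{1/2}/\tsigma^{\,n}$ carries an extra factor of order $\Gamma_1^{(n-1)/\a}\asymp\tsigma^{-(n-1)}$ in the dominant regime; the corresponding expectation is then $O(u^{-\a-(n-1)})=o(u^{-\a})$, so every facet of dimension two or more drops out. This rank-one collapse is exactly what separates the harmonisable geometry from both the Gaussian (volume-dominated) and sub-Gaussian (all facets at order $u^{-\a}$) cases, and explains why only $\lips_0$ and $\lips_1$ survive.

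For the isotropic statement~\eqref{harmo:main-iso:asymp} I would argue precisely as in the sub-Gaussian proof. The functional $\psi(M)\definedas\E\{\p(A_u(f,M))\}$ is additive in the sense of~\eqref{G6.1.2}, is continuous on compact convex sets (bounding $\p(A_u(f,M))$ by the number of critical points of $f$ over $M$), and, when $\mu$ is rotationally invariant, is invariant under rigid motions; Hadwiger's theorem~\eqref{geometry:additivity-func:equation} then gives $\psi(M)=\sum_j c_j\lips_j(M)$, and the rectangle formula~\eqref{harmo:main:asymp}---in which $\mu_j\equiv\mu_1$ and $\sum_j T_j=\lips_1(T)$ under isotropy---pins down the constants $c_0,c_1$ and forces $c_j=0$ for $j\geq 2$, giving~\eqref{harmo:main-iso:asymp}.

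The step I expect to be the main obstacle is making rigorous the asymptotic decoupling underlying both the $n=1$ limit and the $n\geq 2$ negligibility. In each case $\Lambda_J$ and $\tsigma$ are genuinely dependent, sharing the sequences $\{\Gamma_k\}$ and $\{\omega_k\}$, so the Gaussian weight $e^{-u^2/2\tsigma^2}$ is integrated against a random coefficient that is not independent of $\tsigma$. The work is to show that under the size-biasing induced by $e^{-u^2/2\tsigma^2}$ as $u\to\infty$ the ratios $\lambda_{jj}/\tsigma^2$ and $\det\Lambda_J/\tsigma^{2n}$ converge, with enough uniform integrability to pass the limit through the expectation, to their single-mode values $\omega_1(j)^2$ and $0$. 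The compact support and bounded density of $\mu$ should provide the required integrability and boundedness of the $\omega_k$, which is precisely why that hypothesis is imposed.
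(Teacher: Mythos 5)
Your route is the paper's own: condition on $\{\Gamma_k\}$ and $\{\omega_k\}$ to get a stationary Gaussian field with $\tsigma^2=\gamma_\alpha^2\sum_k\Gamma_k^{-2/\alpha}$ and $\widetilde\lambda_{ij}=\gamma_\alpha^2\sum_k\Gamma_k^{-2/\alpha}\omega_k(i)\omega_k(j)$; verify the hypotheses of Theorem \ref{rgeometry:EEC:theorem} and the a priori finiteness (Lemma \ref{lemma:harmonisable-finite}); apply \eqref{rgeometry:EEC:equation}; treat $n=0$ by the Breiman argument of Lemma \ref{l:tauberian}; reduce the $n\geq 1$ terms to the first mode, where the rank-one structure of $W_1$ annihilates every facet of dimension $n\geq 2$; and lift to convex $M$ by Hadwiger. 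All of these structural ideas match the paper.

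The problem is that the step you explicitly defer --- proving that the modes $k\geq 2$ are negligible under the weight $e^{-u^2/2\tsigma^2}$ --- is not a residual technicality: it \emph{is} the proof, and your sketch of it is quantitatively wrong. The paper bounds the difference of square-rooted determinants by the inequality \eqref{difference1:equation} and then shows each cross term is $o(u^{-\alpha})$ (see \eqref{e:bound.diff}) by splitting the expectation on the event $\bigl\{\bigl(\sum_{k\geq 2}\Gamma_k^{-2/\alpha}\bigr)^{1/2}>\ep u\bigr\}$: on this event one uses $\sigma^{-2\theta}e^{-u^2/\sigma^2}\leq cu^{-2\theta}$ together with the \emph{strictly lighter} tail $\P\bigl\{\sum_{k\geq2}\Gamma_k^{-2/\alpha}>u\bigr\}=o(u^{-\alpha/2})$ of the residual sum, and on the complement one uses the density asymptotics of the positive $\alpha/2$-stable variable to get a bound $c\ep u^{-\alpha}$, whence $\limsup_u u^{\alpha}(\cdot)\leq c\ep$ and $\ep\to 0$. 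Your claimed rate $O(u^{-\alpha-(n-1)})$ for $n$-dimensional facets is not correct in general: the residual sum is itself heavy tailed, so ``in the dominant regime $\sum_{k\geq2}\Gamma_k^{-2/\alpha}=O(1)$'' cannot be taken literally. The event that the second mode is also of order $u^{2}$ has probability $\asymp u^{-2\alpha}$, and on it the $n=2$ integrand is of order one; for $\alpha<1$ this contribution $u^{-2\alpha}$ already dwarfs $u^{-\alpha-1}$. Only $o(u^{-\alpha})$ is true, and only that is needed. A second, minor, slip: after truncation the $n=1$ term is $\mu_j\,\E\bigl\{\Gamma_1^{-1/\alpha}\bigl(\sum_k\Gamma_k^{-2/\alpha}\bigr)^{-1/2}\exp\bigl(-u^2/2\gamma_\alpha^2\sum_k\Gamma_k^{-2/\alpha}\bigr)\bigr\}$, which is not covered by Lemma \ref{l:tauberian} with $\beta=0$, since the factor $\Gamma_1^{-1/\alpha}$ is not a power of the full sum; the paper invokes Theorem 2.2 of \cite{AST93} for precisely this quantity (the analogous estimate in the concatenated case is Lemma \ref{lemma:last}).
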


Note, once again, how different this result is to the corresponding Gaussian one, and
even to the sub-Gaussian one. Comparing, for example,
\eqref{harmo:main-iso:asymp} with its Gaussian counterpart
 \eqref{rgeometry:EEC-isotropic-LK:equation} (take  $j=0$ there) we see that while the
leading term in the Gaussian case comes from the volume term $\lips_N(M)$, in the
harmonisable stable case it comes from the two lowest \LKC s, $\lips_0(M)$ and $\lips_1(M)$.

To see why this should be the case, we shall postpone the rather technical
 proof of the theorem for a moment,  take a moment to
describe the principles involved, and then use them to obtain a
 heuristic proof of the theorem.

Consider the representation
\eqref{sum:equation} for harmonisable stable fields. From this,
one can argue that, conditional on $f$ reaching a high
level $u$, the first term, with the coefficient $\Gamma_1^{-1/\alpha}$, will dominate not
only  all the other summands, but in fact their sum. (Formulating this properly, and then
establishing it, is basically the main part of the technical proof.)

This being the case, $f$ will tend, at high levels, to look like a
cosine function, with random height, frequency, and direction. Thus, for example,
in $\real^2$, a typical high level excursion set will look like that in
the square of Figure \ref{stable:figure},  a sequence of
strips looking like the tops of cosine
waves, with small perturbations due to the terms in the representation
 other than the dominant one.

To count how many such strips there are, one needs only to look at the boundary of the
square. In fact, if we increase the size of the square, it is clear that the number of
strips (and so the Euler characteristic) grows proportionately to  the length of the
edges, and not to
the area. Thus it should no longer be surprising that only $\lips_0(M)$ and
$\lips_1(M)$ appear in \eqref{harmo:main-iso:asymp}. (The  $\lips_0(M)$ term arises
to `catch' regions such as those in the lower left corner of  Figure
\ref{stable:figure}.)

\begin{figure}[!ht]
\centerline{\rotatebox{0}{\resizebox{1.5in}{1.5in}{\includegraphics{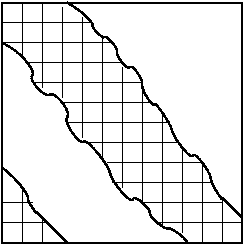}}}}
\caption{A ``typical'' excursion set for a harmonisable field on $[0,1]^2$.}
\label{stable:figure}
\end{figure}

There is also another argument that will give us not only the results
of Theorem \ref{theorem-harmonisable}, but even a little more. Again,
it is only heuristic, 
but it is both elegant and simple, and since it may also be applicable to other
problems it is worth the space we shall devote to it. Its approach is via
integral geometry.

We start with the assumption that it is only the first term of the sum
\eqref{sum:equation} that will be important, and so (ignoring
multiplicative constants) look at the random field 
\begin{equation*}
 % \label{eq:main}
  f(t) = \Gamma_1^{-1/\alpha}\bigl( G_1^{(1)} \cos(t \omega_1) +
  G_1^{(2)} \sin(t\omega_1)\bigr)
\end{equation*}
over a compact, convex $M$.

For a given direction $\omega \in \real^N$, let $P_{\omega}$ denote
projection onto the line containing the origin and  $\omega$ and let
\begin{equation*}
  %\label{eq:tomeg}
  M_{\omega} = \{P_{\omega}t\: t \in M\}
\end{equation*}
be the projection of $M$ onto this line.
Conditioning on  $\omega$, the Euler characteristic of the excursion
set $A_u(f,M)$ is the same as the number of upcrossings of the level $u$
by the one-dimensional cosine wave on
$M_{\omega}$, plus one if $f\geq u$ at the boundary point of $M_\omega$ closest to the
origin.

If we now average over the Gaussian variables $G_1^{(1)}$ and   $G_1^{(2)}$,
then it is easy to check that, conditioned on $\omega$, the expected
Euler characteristic, should be proportional to
\begin{equation*}
  %\label{eq:expecEC}
\P\left\{f(0)\geq u\right\} \ + \
 u^{-\a} \|\omega\| \left|T_{\omega}\right|
\asymp  u^{-\a}\left(k_1 +  k_2  \|\omega\| \left|T_{\omega}\right| \right),
\end{equation*}
for some constants $k_1$ and $k_2$ which we shall not worry about.
Averaging over $\omega$ we find that
\beqq
&&\int_{\real^N}\left( k_1+
k_2 \|\omega\| |T_{\omega} | \right) \frac{\mu(d\omega)}{\mu_0}
%\\ &&\qquad\qquad \qquad\qquad
= k_1 + k_2 \int_{\real^N}
|T_{\omega}|\|\omega\|  \frac{\mu(d\omega)}{\mu_0}.
\eeqq
Now assume that the measure
$\mu$ is rotationally symmetric. Then the so-called
projection theorem of integral geometry (cf.\
 \cite{Klain:Rota:Geometric:Probability:1997}, \S 7.4) gives us that the integral here
is proportional to $\lips_1(M)$. This gives  \eqref{harmo:main-iso:asymp}, once we compute the constants.

Indeed, we can go further than  \eqref{harmo:main-iso:asymp},
dropping the isotropy assumption. In general, the quantity
$  \|\omega\| |M_{\omega}|$ can be expressed as
\begin{equation}
\label{eq:harmonisable:support:diff}
\|\omega\| |M_{\omega}| = \sup_{t \in M} \langle \omega, t \rangle -
\inf_{t \in M} \langle \omega, t \rangle = h_M(\omega) - h_M(-\omega)
\end{equation}
where
\begin{equation}
\label{eq:harmonisable:support}
  h_M(\omega) = \sup_{t \in M} \langle \omega, t \rangle
\end{equation}
is the support function  \cite{Klain:Rota:Geometric:Probability:1997}
of the convex body $M$. (Usually, the support function is considered as
a function
on the unit sphere $S(\real^N)$, but \eqref{eq:harmonisable:support}
is well defined as  a function on $\real^N$.)

Following through with the constants, this argument would give that,
for compact convex bodies $M$, 
\begin{equation}
\label{harmo:main:asymp:general}
\begin{aligned}
\lefteqn{ \E\left\{\p (A_u(f,M))\right\} \asymp} \\
& \qquad   u^{-\a}C_\a\mu_0\left(
\frac{2^{1-\a/2} \Gamma\left(\frac{1+\a}{2}\right)}{\sqrt{\pi} b_\a} \ +\
\frac{1}{2\pi}\int_{\real^N}( h_M(\omega) - h_M(-\omega)) \frac{\mu(d\omega)}{\mu_0}
\right).
\end{aligned}
\end{equation}

To see how this works for rectangles $T$, note that in this case
the difference of the support functions appearing in
\eqref{eq:harmonisable:support:diff} is just $\sum_{j=1}^N |\omega_j|
T_j$. Substituting this into \eqref{harmo:main:asymp:general} gives us
back \eqref{harmo:main:asymp}.

We shall return to integral geometric arguments later, in Section
\ref{section:isotropic},
 but for the moment we leave heuristics and geometric arguments
and give the promised technical  proof.

\vspace*{0.15truein}
\noindent {\bf Proof of Theorem  \ref{theorem-harmonisable}.}
We shall only give a proof of \eqref{harmo:main:asymp}, the result for
the  rectangular parameter space $T$. The extension to  compact, convex parameter
spaces follows from Hadwiger's representation of additive functionals, as in the
proof of Theorem \ref{theorem-subgaussian}.

The proof will follow the general lines of that of Theorem
\ref{theorem-subgaussian}, in that it begins with a conditioning argument from
which asymptotics are computed. The harmonisable case, however, is somewhat
more complicated.

We begin with the representation  \eqref{sum:equation}.
If we condition on the sequences
$\{\Gamma_k\}$ and   $\{\omega_k\}$, it is immediate that
the conditioned field is a stationary Gaussian field on $\RN$ with
mean zero and covariance function
\beqq
R(t) = \gamma_\alpha^2
\sum_{k=1}^\infty   \Gamma_k^{-2/\alpha}\cos(t\omega_k),
\eeqq
where
\beq
\label{gamma:definition}
\gamma_\alpha \definedas \big(C_\alpha b_\alpha^{-1}
\mu_0\big)^{1/\alpha}.
\eeq
Therefore it has variance
\beq
\label{parameters:sigma:equation}
\tsigma^2 =  \gamma_\alpha^2 \sum_{k=1}^\infty \Gamma_k^{-2/\alpha}
\eeq
and second spectral moments
\beqq
%\label{parameters:lambda:equation}
\widetilde\lambda_{ij} =  \gamma_\alpha^2
\sum_{k=1}^\infty \Gamma_k^{-2/\alpha} \omega_k(i)\omega_k(j).
\eeqq

Our next step will be to apply the Gaussian Theorem
\ref{rgeometry:EEC:theorem} to the conditioned harmonisable process, and then use the fact  that
\beqq
%\label{harmonizable:iterated}
\E\left\{\p (A_u(f,T)\right\} =
\E\left\{\E\left\{\p \left(A_u(f,T)\right)\big| \Gamma_k,\ \omega_k,\ k\geq 1\right\}\right\}.
\eeqq
As in the sub-Gaussian case, the iterated expectation requires
justification, which will be provided only later in Lemma
\ref{lemma:harmonisable-finite}.

In order the apply the Gaussian result, we  need first to  verify three conditions:
that the conditioned process is a.s.\ $C^2$, that the joint distributions
of the various conditioned derivatives are non-degenerate, and that the condition
\eqref{theorem-subgaussian} on the moduli of continuity of the conditioned process
is satisfied.

We tackle the first of these first, by showing that $f$, itself, is a.s.\
$C^2$. From this fact and Fubini's theorem, the same will be true of the conditioned
processes.

Note that the assumption of compact support for
the control measure $\mu$ implies, by Corollary 11.7.5 of \cite{ST94},
that  $f$ is absolutely continuous. Write $f_i$ and $f_{ij}$ for the
various first and second order partial derivatives of $f$. Exploiting
the representation \eqref{integralrep:equation}, 
it is easy to check that as  versions of its partial derivatives we
can take the random fields 
\beqq
f_j(t) = {\text{Re} }\Big\{ \int_{\RN} e^{it\omega}\,i\omega(j)\,
Z(d\omega) \Big\}.
\eeqq
Since these are,  again, harmonisable fields with compactly supported
control measures they are all continuous. From this it follows that
$f$ is $C^1$. Applying the same argument to the derivatives of the
$f_j$ gives that $f$ is $C^2$.

We now turn to the issue of the non-degeneracy of joint distribution
of the various derivates. Applying the representation
\eqref{sum:equation}, we have
\beqq
\qquad
f_i(t) = \big(C_\alpha b_\alpha^{-1}
\mu_0\big)^{1/\alpha}
        \sum_{k=1}^\infty \Gamma_k^{-1/\alpha}\omega_k(i)
       \left(-G^{(1)}_k\sin(t\omega_k) +
       G^{(2)}_k\cos(t\omega_k)\right),
\eeqq
and
\beq
\label{fij-rep} \\ \notag
f_{ij}(t)
= -\big(C_\alpha
b_\alpha^{-1} \mu_0\big)^{1/\alpha}
        \sum_{k=1}^\infty \Gamma_k^{-1/\alpha}\omega_k(i)\omega_k(j)
       \left(G^{(1)}_k\sin(t\omega_k) +
       G^{(2)}_k\cos(t\omega_k)\right),
\eeq
$i,j=1,\ldots, N$. In particular, the joint distribution of $f$ and its
       partial derivatives at time zero  (up to a positive multiplicative
       constant) is that of the random vector
\beqq
\left\{ \sum_{k=1}^\infty \Gamma_k^{-1/\alpha} G^{(1)}_k, \
\sum_{k=1}^\infty \Gamma_k^{-1/\alpha} \omega_k(i)\, G^{(2)}_k,
\ \sum_{k=1}^\infty \Gamma_k^{-1/\alpha} \omega_k(i) \omega_k(j)\,
G^{(1)}_k \right\}_{i,j=1}^N.
\eeqq
Since we have assumed that the distribution of the random vectors
$\omega_k$ has a non-vanishing absolutely continuous component, 
%\omega_k(j))$, $i,j=1,\dots, N$, $k\geq 1$, do not lie in
%an $N^2-1$-dimensional subspace of  $\reals^{N^2}$,
this is, with probability 1, a non-degenerate Gaussian vector. 

It remains to check that condition
\eqref{modcont:equn} on the moduli of continuity of the second order
derivatives 
of the  conditioned process is satisfied. By Corollary 11.3.2 of \cite{RFG},
writing $C_{f_{ij}}(t)$ for the covariance function
of the second order partial derivatives $f_{ij}$, this condition will be satisfied if,
 for small enough $t$, and some finite $K,\eta>0$,
\beq
\label{rgeometry:logcovariance2:equation}
\max_{i,j} \left|C_{f_{ij}}(0) - C_{f_{ij}}(t)\right|\
\leq \ K\left|\ln |t|\,\right|^{-(1+\eta)}.
\eeq
However, using the representation \eqref{fij-rep} for the $f_{ij}$ it is easy to compute
an expression for the $C_{f_{ij}}$ and, using the compact support of the $\omega_k(i)$,
to see from this that \eqref{rgeometry:logcovariance2:equation} is not only satisfied,
but that  the bound on the right hand side can be taken of
order $|t|^2$.

With all the conditions checked, we can now apply Theorem  \ref{rgeometry:EEC:theorem}
to the conditioned harmonisable process, from which  it follows that
\beq
\label{bigexpectation:equation}
&&\E\left\{\p (A_u(f,T))\right\} \\ &&\qquad =\E\left\{
e^{-u^2/2\tsigma^2} \sum_{n=1}^{N}
\sum_{J\in \cO_n}
\frac{|J|\,|\widetilde\Lambda_J|^{1/2}}{(2\pi)^{(n+1)/2}\tsigma^{n}}H_{n-1}\left(\frac{u}{\tsigma}\right)
+\Psi\left(\frac{u}{\tsigma}\right)
\right\},   \notag
\eeq
where the expectation in the second line  is over the $\Gamma_k$ and $\omega_k$,
and for $J\in \cal O_n$ the $n\times n$ matrix $\widetilde \Lambda_J$ bears the same
relation to
 the $\widetilde\lambda_{ij}$ that $\Lambda_J$ does to the $\lambda_{ij}$.

The term $\E\left\{\Psi\left({u}/{\tsigma}\right) \right\}$ can be
handled much as in the proof of Theorem \ref{theorem-subgaussian},
using  \eqref{psi:tauberian} to show that
\beqq
%\label{harm:term1}
\E\left\{\Psi\left(\frac{u}{\tsigma}\right)\right\}
\asymp  u^{-\a}\,
\frac{2^{1-\a/2}C_\a \mu_0 \Gamma\left(\frac{1+\a}{2}\right)}{\sqrt{\pi} b_\a}.
\eeqq
As for the other terms, it is clear, expanding the Hermite polynomials,
that we need to study the asymptotics of terms of the form
\beq
\label{harmonizable:powers}
\E\left\{\frac{|\widetilde\Lambda_J|^{1/2}u^{n-1-2j}}{\tsigma^{2n-1-2j}}
e^{-u^2/2\tsigma^2} \right\},
\eeq
for $0\leq j\leq \lfloor \mbox{$\frac{n-1}{2}$} \rfloor $.

To do this, we need a little notation. For $k\geq 1$ and a  facet $J$
of dimension $n$  define the $n\times n$ matrices $W_k\equiv W_k(J)$
by setting 
\beq\
\label{Wk}
W_k(i,j)=\omega_k(i)\omega_k(j), \qquad i,j\in \sigma(J).
\eeq
Then
\eqref{harmonizable:powers} can be rewritten as
\beq
\label{oneterm:equation} \\  \notag
%&&\lim_{u\to\infty} u^\alpha\,
%\E\left\{ \frac{|\Lambda_J|^{1/2}
%    u^{n-1}}{\sigma^{2n-1}}e^{-u^2/2\sigma^2}
%\right\}
% \\ \nonumber
%&&= \lim_{u\to\infty} u^\alpha \,
\gamma_\alpha^{1/2+j}
\E\left\{ \frac{\left| \det\left( \sum_1^\infty \Gamma_k^{-2/\alpha}
      W _k
  \right) \right|^{1/2} u^{n-1-2j}}{ \left(\sum_1^\infty
  \Gamma_k^{-2/\alpha}\right)^{(2n-1-2j)/2}}\exp\left(\frac{-u^2}{2
  \gamma_\alpha^2  \sum_1^\infty \Gamma_k^{-2/\alpha}}\right)
\right\}.
\eeq

Our first step in handling this expectation will be to truncate the sum in the numerator. Note
that, for any two $n\times n$  matrices  $A$ and $B$, the standard
expansion of a determinant shows that
\beqq
\det (A+B) \leq \det (A) + \sum_{m=1}^{n} C_{nm} \|A\|^{n-m} \|B\|^{m},
%\label{det:inequality}
\eeqq
for some combinatorial constants $C_{nm}$ that we allow to change from
line to line and $\|A\|=\max_{ij}|a_{ij}|$. Furthermore, since
$\sqrt{x+y}\leq \sqrt{x}+\sqrt{y}$, we can apply this to the numerator
of \eqref{oneterm:equation} to see that there is a constant
$C$  such that
\beq
\label{difference1:equation}
&&\Big|\Big(\det\Big( \sum_1^\infty \Gamma_k^{-2/\alpha}
       W_k \Big)\Big)^{1/2} - \Big(\det \Big( \Gamma_1^{-2/\alpha}
       W_1  \Big) \Big)^{1/2}\Big|
\\ && \qquad\qquad
\leq
C\sum_{m=1}^n \Bigl(
       \Gamma_1^{-2/\alpha}\|W_1\|\Big)^{(n-m)/2}
 \Big(\sum_{k=2}^\infty
\Gamma_k^{-2/\alpha}\|W_k\|\Big)^{m/2}.    \notag
\eeq
We now claim that, in view of the above inequality, the expectation in
\eqref{oneterm:equation}
differs from
\beq
\label{oneterm1:equation}  \qquad
\E\left\{ \frac{\left| \det\left(  \Gamma_1^{-2/\alpha} W_1 \right)
  \right|^{1/2} u^{n-1-2j}}{ \left(\sum_1^\infty
  \Gamma_k^{-2/\alpha}\right)^{(2n-1-2j)/2}}
  \exp\left(\frac{-u^2}{2\gamma_\alpha^2 \sum_1^\infty
 \Gamma_k^{-2/\alpha}}\right)   \right\}
\eeq
by no more than a factor of $o(u^{-\a})$.
Note that in the simplest case, when $N=n=1$ and $j=0$, this is precisely
Lemma 2.4 of \cite{AST93}.  To prove the general claim we need here,
observe that, by \eqref{difference1:equation} and the assumption of
the bounded support of the random vectors $\omega_k$,
it is enough to prove that for every $0\leq j\leq \lfloor
\mbox{$\frac{n-1}{2}$} \rfloor $ and $1\leq m\leq n$,
\beq
  \label{e:bound.diff} \\ \notag
u^{n-1-2j}\E\left\{ \frac{ \Gamma_1^{-(n-m)/\alpha}\Big(\sum_{k=2}^\infty
\Gamma_k^{-2/\alpha}\Big)^{m/2}}{\left(\sum_1^\infty
  \Gamma_k^{-2/\alpha}\right)^{(2n-1-2j)/2}}
  \exp\left(\frac{-u^2}{2\gamma_\alpha^2 \sum_1^\infty
  \Gamma_k^{-2/\alpha}}\right)   \right\}
= o(u^{-\a}),
\eeq
as $u\to\infty$. To this end, note that the left hand side of
\eqref{e:bound.diff} can be bounded by
\begin{equation} \label{e:bound.diff1}
u^{n-1-2j}\E\left\{ \frac{ \Big(\sum_{k=2}^\infty
\Gamma_k^{-2/\alpha}\Big)^{1/2}}{\left(\sum_1^\infty
  \Gamma_k^{-2/\alpha}\right)^{(n-2j)/2}}
  \exp\left(\frac{-u^2}{2\gamma_\alpha^2 \sum_1^\infty
  \Gamma_k^{-2/\alpha}}\right)   \right\} \,.
\end{equation}
Take a small $\vep>0$ and write
\eqref{e:bound.diff1} as a sum of two terms, the first when
the expectation is restricted  the event $\bigl\{
\bigl(\sum_{k=2}^\infty \Gamma_k^{-2/\alpha}\big)^{1/2}>\vep
u\bigr\}$, and the second arising  when
the expectation is restricted  the complementary
 event.

Noting that for any $\theta>0$ there is a finite
$c$ such that $\sigma^{-2\theta}\exp\{ -u^2/\sigma^2\}\leq
cu^{-2\theta}$ for all $u,\sigma>0$, we see that the first term can be
bounded by
\beqq
c\, u ^{-1}\E\left\{ \Big(\sum_{k=2}^\infty
\Gamma_k^{-2/\alpha}\Big)^{1/2}
\indic_{\left(\sum_{k=2}^\infty
\Gamma_k^{-2/\alpha}\right)^{1/2}>\vep u}  \right\}.
\eeqq
Since $P\bigl( \sum_{k=2}^\infty \Gamma_k^{-2/\alpha}>u\bigr)= o\bigl(
u^{-\alpha/2}\bigr)$ (see \cite{ST94}), it follows that the first term
in \eqref{e:bound.diff1} is $o\bigl( u^{-\alpha})$ as $u\to\infty$ for
every fixed $\vep>0$. On
the other hand, the second term  is, clearly,
bounded by
$$
\vep u^{n-2j}E\left\{ \frac{1}{\left(\sum_1^\infty
  \Gamma_k^{-2/\alpha}\right)^{(n-2j)/2}}
  \exp\left(\frac{-u^2}{2\gamma_\alpha^2 \sum_1^\infty
  \Gamma_k^{-2/\alpha}}\right)   \right\}\,.
$$
Since the sum $\sum_1^\infty   \Gamma_k^{-2/\alpha}$ is a positive
$\alpha/2$-random variable, it has a density of asymptotic
order $x^{-(1+\alpha/2)}$, and straightforward  estimates now show that the expectation
above is of order $u^{-(n-2j+\alpha)}$ as $u\to\infty$.
Consequently,
\beqq
\limsup_{u\to\infty} u^\alpha\ \bigl( \text{the second term in
  \eqref{e:bound.diff1}}\bigr) \leq c\vep
\eeqq
for some $c>0$, and letting $\vep\to 0$ proves \eqref{e:bound.diff}.

Returning now to \eqref{oneterm1:equation}, which is what is left to study, we note
that it has a simple structure, since it is
immediate from the definition of  $W_1=W_1(J)$ that it is a rank one matrix for all $J\in \cal O_n$.
 Hence,
\beqq
\det\left(  \Gamma_1^{-2/\alpha} W_1  \right)
=\Gamma_1^{-n/\alpha} \det \left(W_1\right)\equiv 0,
\eeqq
unless $n=1$. Thus, taking $n=1$ and $j=0$ (now the only possible
value of $j$) in \eqref{oneterm1:equation}, recalling the
independence of the $\omega_k$ and $\Gamma_k$, we need only consider
$N$ terms of the form
\beqq
%\label{oneterm2:equation}  \\ \notag
\E\left\{ \left|\omega_1(i)\right|\right\}
 \E\left\{ \frac{   \Gamma_1^{-1/\alpha}}{ \left(\sum_1^\infty
  \Gamma_k^{-2/\alpha}\right)^{1/2}}\exp\left(\frac{-u^2}{2 \gamma_\alpha^2
  \sum_1^\infty \Gamma_k^{-2/\alpha}}\right)       \right\}, \
 i=1,\ldots, N.
\eeqq
The first expectation here is, by definition, $\mu_i$, while the the second
converges to the constant $\mu_0C_\alpha$ by the first part of Theorem
 2.2 of  \cite{AST93}. Putting everything together proves
 \eqref{harmo:main:asymp}. \qed
\vspace*{0.15truein}

\section{Concatenated-harmonisable fields}
\label{main:wavelimited}
For our final class of examples we shall introduce a class of random fields
which, to the best of our knowledge, have not been studied earlier. We actually discovered
them by looking for a class of examples which `interpolated' between the sub-Gaussian
ones, for which all the $\lips_j$ appear in the asymptotic formula for the mean
Euler characteristic of excursion sets, and the harmonisable ones, for which only
$\lips_0$ and $\lips_1$ appear. However, having found them for this rather artificial
purpose, we believe that they actually present an interesting class of stable fields that
will provide useful models in applied settings.

 To define this new class of random fields, we take the representation
\beq
\label{wl-sum:equation}  \qquad
 f(t) = \left(\frac{C_\alpha\mu_0}{ b_\alpha}\right)^{1/\alpha}
        \sum_{k=1}^\infty \Gamma_k^{-1/\alpha}
        \sum_{\ell=1}^{N'}
    \left(G^{(1)}_{k\ell}\cos(t\omega_{k\ell}) + G^{(2)}_{k\ell}\sin(t\omega_{k\ell})\right).
\eeq
where the $\Gamma_k$ are as in \eqref{sum:equation}, the
 $\{G^{(i)}_{k\ell}\}$, $i=1,2$, $\ell=1,\dots,N'$, are
independent, standard Gaussian random variables, and the
 $\{\omega_{k\ell}\}$ are independent with the distribution of the $\omega_k$
of \eqref{sum:equation}. The parameter $N'$ satisfies $1\leq N'\leq N$.
(When $N'=1$ we  recover the harmonisable fields of \eqref{sum:equation}.)
We call such fields concatenated-harmonisable, the adjective ``concatenated'' coming
 from the innermost sum in \eqref{wl-sum:equation}.

If the heuristics used before work again the dominant term
in the expansion,
\beqq
\left(\frac{C_\alpha\mu_0}{ b_\alpha}\right)^{1/\alpha}
\Gamma_1^{-1/\alpha}  \sum_{\ell=1}^{N'}
 \left(G^{(1)}_{k\ell}\cos(t\omega_{k\ell}) +
 G^{(2)}_{k\ell}\sin(t\omega_{k\ell})\right), 
\eeqq
should determine the properties of the high level excursion sets.
This random field is quite different from that of the simple random
wave generated by the first term  of the harmonisable processes, and
so the arguments there, based on examples as in Figure
\ref{stable:figure}, are not going to carry over easily to the current
situation. We did find an integral geometric argument which justified
Theorem \ref{theorem-concat} below, but it was no longer simple and,
since it was also non-rigorous, we shall not bring it here.

In order to state the result, we need a little more notation.
Changing slightly that of the proof of Theorem \ref{theorem-harmonisable},
 choose a facet $J\in \cal O_n$, and, for each
$k\geq 1$, define
the $n\times n$ matrix $W_k(J)$ with elements
\begin{equation}
\label{wkJ:defn1}
\left(W_k(J)\right)_{ij} =  \sum_{\ell=1}^{N'}
\omega_{k\ell}(i)\omega_{k\ell}(j), 
\end{equation}
for  $i,j\in \sigma(J)$.
Furthermore, define the ($k$ independent) parameters
%\beqq
%\Lambda (J)\definedas \int \left| \det \left(W_k (J)\right)\right|^{1/2} \mu_{|J}(d\omega).
%\eeqq
%The integral here is, obviously, independent of the choose of $k$,  and is taken
%over those $\omega_{k\ell}(i)$ for which $i\in J$. The measure $\mu_{|J}$ is shorthand
%for the product (non-probability) measure
%\beqq
%\mu_{|J}(d\omega) \definedas
%\prod_{j\in J} \prod_{1\leq \ell \leq N'} \mu (d\omega_{k\ell}(j)).
%\eeqq
%\comment{Check in the end that the ``powers'' of $\mu_0$ are right here. I am not at
%all sure that we should not just have a factor of $\mu_0$ and
\beq
\label{LambdaJ-concat}
\Lambda (J)\definedas \E\left\{ \left| \det \left(W_k (J)\right)\right|^{1/2} \right\}.
\eeq
%
% }

\begin{theorem}
\label{theorem-concat}
Let $f$ be a  concatenated-harmonisable, \SaS\
random field as in  \eqref{wl-sum:equation},
defined on the $N$-rectangle $T$ of \eqref{T:definition}.
Assume that the control measure $\mu$
has  compact support, and a bounded density with respect to the
Lebesgue measure. Then
\beq
\label{concatharmo:main:asymp}
\qquad \E\left\{\p (A_u(f,T))\right\} &\asymp&   u^{-\a}\mu_0 C_\a\Bigg(K_0
 +
 \sum_{n=1}^{N'}  K_n \sum_{J\in\cal O_n}
%\sum_{j=0}^{\lfloor \mbox{$\frac{n-1}{2}$} \rfloor}
 |J| \Lambda (J)
\Bigg),
\eeq
%where the $\mu_j$, $j=1,\dots,N$ are the non-normalised moments
%\beq
%\label{muj}
%\mu_j \definedas \int_{\RN} |\omega_j| \,d\omega.
%\eeq
where
\beqq
K_0 = \frac{2^{1-\a/2}
  \Gamma\left(\frac{1+\a}{2}\right)(N')^{\a/2}}{\sqrt{\pi} b_\a}, 
\eeqq
and, for $n=1,\dots,N'$, $K_n$ is given by \eqref{Kn} below. 

If, furthermore, $\mu$ is rotationally invariant (so that f is
isotropic) and $M$ is a compact convex domain, then
\beq
\label{concatharmo:main-iso:asymp}
\qquad \E\left\{\p (A_u(f,M))\right\} &\asymp&   u^{-\a}\mu_0 C_\a
 \sum_{n=0}^{N'}  K_n \Lambda_n \lips_n(M),
\eeq
where $\Lambda_n$ is given by \eqref{LambdaJ-concat} for any
 $n$-dimensional facet $J$. 

\end{theorem}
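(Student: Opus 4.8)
The plan is to follow, almost verbatim, the route used for the harmonisable fields in the proof of Theorem \ref{theorem-harmonisable}, modifying only those steps where the inner sum over $\ell$ in \eqref{wl-sum:equation} changes the picture. As before, I would first condition on the sequences $\{\Gamma_k\}$ and $\{\omega_{k\ell}\}$, so that the conditioned field is zero-mean stationary Gaussian. A direct computation from \eqref{wl-sum:equation} shows that its variance is now
\[
\tsigma^2 = N' \gamma_\alpha^2 \sum_{k=1}^\infty \Gamma_k^{-2/\alpha},
\]
with $\gamma_\alpha$ as in \eqref{gamma:definition}, the extra factor $N'$ arising because each $\Gamma_k$-block carries $N'$ independent unit-variance Gaussian waves; and that, for a facet $J \in \cal O_n$, the conditioned second-moment matrix is $\widetilde\Lambda_J = \gamma_\alpha^2 \sum_{k=1}^\infty \Gamma_k^{-2/\alpha} W_k(J)$ with $W_k(J)$ as in \eqref{wkJ:defn1}. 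The verification that the conditioned field is a.s.\ $C^2$, non-degenerate, and satisfies the modulus-of-continuity bound \eqref{modcont:equn} goes through exactly as in the harmonisable case, using the compact support and bounded density of $\mu$; the finiteness of $\E|\p(A_u(f,T))|$ needed to justify the iterated expectation would be relegated to an analogue of Lemma \ref{lemma:harmonisable-finite}. Applying the Gaussian Theorem \ref{rgeometry:EEC:theorem} then produces the exact analogue of \eqref{bigexpectation:equation}, an expectation over $\{\Gamma_k, \omega_{k\ell}\}$ of a $\Psi$-term plus a sum of Hermite-polynomial terms.

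The $\Psi$-term is handled as in \eqref{psi:tauberian}: writing $\Psi(u/\tsigma) = \tfrac12 \P\{G^2 \tsigma^2 > u^2 \mid \Gamma,\omega\}$ for a standard normal $G$ and applying Breiman's lemma to the regularly-varying $\alpha/2$-stable sum $\sum_k \Gamma_k^{-2/\alpha}$, the only change from the harmonisable computation is that the effective scale $\gamma_\alpha^2$ is replaced by $N'\gamma_\alpha^2$. This produces exactly the factor $(N')^{\alpha/2}$ appearing in $K_0$, so that $\E\{\Psi(u/\tsigma)\} \asymp u^{-\alpha}\mu_0 C_\alpha K_0$.

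For the Hermite terms I would again truncate the determinant in the numerator, replacing $\det(\sum_k \Gamma_k^{-2/\alpha} W_k(J))$ by $\det(\Gamma_1^{-2/\alpha} W_1(J))$; the determinant-expansion bound \eqref{difference1:equation} and the estimates \eqref{e:bound.diff} that control the remainder carry over unchanged, since they used only the boundedness of the $\omega$'s and the $\alpha/2$-stability of the sum, both still in force. Here is the one genuinely new point, and the reason $N'$ enters the range of summation: whereas $W_1(J)$ was rank one in the harmonisable case (forcing all terms with $n \geq 2$ to vanish), here $W_1(J)$, whose $(i,j)$ entry is $\sum_{\ell=1}^{N'}\omega_{1\ell}(i)\,\omega_{1\ell}(j)$ (cf.\ \eqref{wkJ:defn1}), is a sum of $N'$ rank-one matrices, hence of rank $\min(n,N')$. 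Consequently $\det W_1(J)$ is a.s.\ strictly positive for $n \leq N'$ and identically zero for $n > N'$, which is precisely what cuts the sum off at $N'$ and makes the quantity $\Lambda(J) = \E\{|\det W_1(J)|^{1/2}\}$ of \eqref{LambdaJ-concat} the natural coefficient.

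After truncation, since $W_1(J)$ depends only on $\{\omega_{1\ell}\}$ while the surviving $\Gamma$-dependence sits in $\tsigma$, independence lets me factor each surviving term as $|J|\,\Lambda(J)$ (up to the constant $\gamma_\alpha^n$ and the Gaussian prefactor $(2\pi)^{-(n+1)/2}$) times a purely $\Gamma$-dependent expectation of the form
\[
\E\left\{ \frac{\Gamma_1^{-n/\alpha}\, u^{n-1-2j}}{\bigl(\sum_k \Gamma_k^{-2/\alpha}\bigr)^{(2n-1-2j)/2}} \exp\!\left(\frac{-u^2}{2N'\gamma_\alpha^2 \sum_k \Gamma_k^{-2/\alpha}}\right)\right\}.
\]
I expect the asymptotics of this last expectation to be the main obstacle. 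The mechanism is the single-big-jump principle for the $\alpha/2$-stable sum $S = \sum_k \Gamma_k^{-2/\alpha}$: large values of $S$, which are what the exponential selects (it forces $S$ of order $u^2$), are driven by $\Gamma_1$ being small, so that $\Gamma_1^{-2/\alpha}/S \to 1$ on the relevant event and $\Gamma_1^{-n/\alpha}$ may be replaced by $S^{n/2}$. A change of variables $S = u^2 v$ against the tail density of order $s^{-(1+\alpha/2)}$ then shows each such term is asymptotic to a constant multiple of $u^{-\alpha}$, the powers of $u$ cancelling to give exactly $u^{-\alpha}$ independently of $n$ and $j$. This is the generalisation for arbitrary $n \leq N'$ of the $n=1$ computation invoked at the end of the harmonisable proof (the first part of Theorem 2.2 of \cite{AST93}); summing the resulting constants over $0 \leq j \leq \lfloor (n-1)/2 \rfloor$ against the Hermite coefficients, and collecting the factors $(2\pi)^{-(n+1)/2}$, the $(N'\gamma_\alpha^2)$-powers and the stable-density constant, yields the explicit $K_n$. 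Assembling the $\Psi$-term and these Hermite contributions gives \eqref{concatharmo:main:asymp}. Finally, \eqref{concatharmo:main-iso:asymp} follows for rectangles from \eqref{geometry:LK-rectangl2e:equation} together with the fact that $\Lambda(J)$ depends on $J$ only through $\dim J$ in the isotropic case, and is then extended to general compact convex $M$ by the additivity-plus-Hadwiger argument \eqref{G6.1.2}--\eqref{geometry:additivity-func:equation} exactly as in the proofs of Theorems \ref{theorem-subgaussian} and \ref{theorem-harmonisable}.
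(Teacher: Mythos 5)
Your proposal is correct and follows essentially the same route as the paper's proof: conditioning on $\{\Gamma_k\},\{\omega_{k\ell}\}$, applying the Gaussian Theorem \ref{rgeometry:EEC:theorem}, treating the $\Psi$-term via the Breiman-type lemma with the extra factor $N'$, truncating the determinant to the first term, using the rank-$\min(n,N')$ observation to cut the sum at $N'$ and identify $\Lambda(J)$ as the coefficient, and extending to convex $M$ by Hadwiger's theorem. The one step you only sketch --- the single-big-jump asymptotics of the purely $\Gamma$-dependent expectation --- is precisely what the paper isolates and proves as Lemma \ref{lemma:last}, by exactly the mechanism you describe (upper bound from $\Gamma_1^{-2/\alpha}\leq\sum_k\Gamma_k^{-2/\alpha}$ plus Lemma \ref{l:tauberian}, lower bound by restricting to the event that the remainder of the sum is small relative to $\Gamma_1^{-2/\alpha}$).
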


\begin{proof} We shall prove only \eqref{concatharmo:main:asymp}, with the result
\eqref{concatharmo:main-iso:asymp} for isotropic processes on convex parameter sets
following from the usual integral geometric argument via Hadwiger's theorem.

The proof follows the lines of that of Theorem \ref{theorem-harmonisable}, and so
we start by conditioning on the sequences
$\{\Gamma_k\}$ and   $\{\omega_{k\ell}\}$,  to obtain a stationary Gaussian process on
$\RN$ with mean zero, variance
\beqq
%\label{parameters:sigma:equation1}
\tsigma^2 =  \gamma_\alpha^2 N' \sum_{k=1}^\infty \Gamma_k^{-2/\alpha},
\eeqq
where $\gamma_\alpha$ is as at \eqref{gamma:definition},
and with second order spectral moments
\beqq
%\label{parameters:lambda:equation2}
\widetilde\lambda_{ij} &=&  \gamma_\alpha^2
\sum_{k=1}^\infty \Gamma_k^{-2/\alpha}
\sum_{\ell=1}^{N'} \omega_{k\ell}(i)\omega_{k\ell}(j).
\eeqq

We need to prove that the iterated expectation argument for computing
$\E\left\{\p (A_u(f,T))\right\}$ is valid. This is done in Lemma
\ref{lemma:harmonisable-finite} below. The arguments that worked in
the harmonisable case also work here to show that the 
conditionally Gaussian process satisfies all the conditions of Theorem
\ref{rgeometry:EEC:theorem}.

Thus, our task becomes one of evaluating \eqref{bigexpectation:equation} once
again, albeit with the new definitions of the variables there.
The term involving $\Psi$ changes from that in the previous proof
only insofar as there is now an additional factor of
$N'$ in the definition of $\tsigma^2$ in \eqref{parameters:sigma:equation}
and this gives the first term in \eqref{concatharmo:main:asymp}
(cf.\ Lemma \ref{l:tauberian}.)

As far as the other terms are concerned, the argument is identical to
that in the proof of Theorem  \ref{theorem-harmonisable} as far as
\eqref{oneterm1:equation}, and so what remains to compute is
\beqq
%\label{oneterm11:equation}  \qquad
(N^\prime)^{-(2n-1-2j)/2}\E\left\{ \frac{\left| \det\left(
  \Gamma_1^{-2/\alpha} W_1 \right) \right|^{1/2} u^{n-1-2j}}{
  \left(\sum_1^\infty \Gamma_k^{-2/\alpha}\right)^{(2n-1-2j)/2}}
\exp\left(\frac{-u^2}{2\gamma_\alpha^2 N^\prime 
  \sum_1^\infty \Gamma_k^{-2/\alpha}}\right) \right\}
\eeqq
for $1\leq n\leq N'$ and $0\leq j\leq \lfloor \mbox{$\frac{n-1}{2}$}
  \rfloor $. The restrictions on $n$ follow from the fact that the
  matrices  $W_k(J)$ have rank not exceeding $N^\prime$ and, since
  the $\omega_{k\ell}$ have a distribution with a non-vanishing
  absolutely continuous component, this rank will equal $\min (n,N')$,
  with  positive probability. 

Since $W_1$ is independent of the $\Gamma_k$, we can rewrite the above as
\beqq
&&(N^\prime)^{-(2n-1-2j)/2}\E\left\{|\det (W_1(J))|^{1/2}\right\}
\\ && \qquad\qquad\qquad\times
\E\left\{ \frac{  \Gamma_1^{-n/\alpha}  u^{n-1-2j}}{
  \left(\sum_1^\infty
  \Gamma_k^{-2/\alpha}\right)^{(2n-1-2j)/2}}
\exp\left(\frac{-u^2}{2\gamma_\alpha^2 N^\prime 
  \sum_1^\infty \Gamma_k^{-2/\alpha}}\right)       \right\}.
\eeqq

The first expectation here is, by definition, $\Lambda (J)$. As for
the second, its asymptotics follow from Lemma \ref{lemma:last} below,
to give 
%
%\beqq
%%&& \E\left\{ \Gamma_1^{(2n-1-j-n)/\alpha}
% u^{n-1-2j}
%%\exp\left(\frac{-u^2}{2\gamma_\alpha^2
%%  \Gamma_1^{-2/\alpha}}\right)       \right\}\\
%%&&\qquad\qquad =
%% u^{n-1-j}
% \E\left\{ \Gamma_1^{-(2j+1-n)/\alpha}
%\exp\left(\frac{-u^2}{2 \gamma_\alpha^2%
%  \Gamma_1^{-2/\alpha}}\right)       \right\}
%\eeqq
%If I have finally got the heuristic asymptotics right, this should look like
\beqq
u^{n-1-2j}\times \mu_0 C_\a C_{nj}  u^{-(\alpha +(n-1-2j))}
=  \mu_0 C_\a C_{nj}  u^{-\alpha},
\eeqq
where
\beqq
C_{nj}\definedas
\a C_{\a/2}\sigma_\a^{\a/2}b_\a^{-1}2^{(\a+n-2j-3)/2}
\Gamma\left(\text{$\frac{\a+n-1-2j}{2}$}\right)
\gamma_\a^{n-1-2j}(N^\prime)^{(\alpha-n)/2}. 
\eeqq
If we now substitute this back into \eqref{bigexpectation:equation},
collect all the constants appearing in the Hermite polynomials, and
define 
\beq
\label{Kn}
K_n=\frac{(n-1)!}{(2\pi)^{(n+1)/2}}\sum_{j=0}^{ \lfloor
  \mbox{$\frac{n-1}{2}$} \rfloor}\frac{(-1)^j C_{nj}}{j!(n-1-2j)!2^j}, 
\eeq
a few lines of algebra yield \eqref{concatharmo:main:asymp}, 
and we are done.
\qed
\end{proof}

\begin{lemma}
\label{lemma:last}
Maintaining the above notation, set
\beqq
X=\sum_{j=1}^\infty \Gamma_j^{-2/\alpha},\qquad n\geq 0,\  \beta>n/2,
\eeqq
and take $\gamma>0$. Then
\beq
\label{bound:last}
%\begin{aligned}
%\lefteqn
&&{\E\left\{ \Gamma_1^{-n/\alpha} X^{-\beta}e^{-u^2/2\gamma^2X}\right\} }
\\ \notag &&\quad \asymp
\alpha 2^{(\a+2\beta -n -2)/2} C_{\a/2}\sigma_\a^{\a/2}
\gamma^{\a+2\beta-n} \Gamma(\beta +\smallhalf(\alpha-n)) u^{-(\alpha+2\beta-n)}.
%\end{aligned}
\eeq
\end{lemma}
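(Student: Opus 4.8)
The plan is to isolate the first term of the series $X=\sum_k\Gamma_k^{-2/\alpha}$ and then reduce the whole expectation to the scalar Tauberian estimate \eqref{e:tauberian} already established in Lemma \ref{l:tauberian}. The guiding principle is the same ``single large jump'' phenomenon that drove the proof of Theorem \ref{theorem-harmonisable}: the damping factor $e^{-u^2/2\gamma^2 X}$ forces $X$ to be of order $u^2$, and for the $\alpha/2$-stable sum $X$ such large values are produced, to leading order, only by the largest summand $\Gamma_1^{-2/\alpha}$, the remainder $X'\definedas\sum_{k\ge2}\Gamma_k^{-2/\alpha}$ being negligible on this event. Since there $X\approx\Gamma_1^{-2/\alpha}$, the awkward prefactor satisfies $\Gamma_1^{-n/\alpha}=(\Gamma_1^{-2/\alpha})^{n/2}\approx X^{n/2}$, so that
\beqq
\E\left\{\Gamma_1^{-n/\alpha}X^{-\beta}e^{-u^2/2\gamma^2 X}\right\}\ \asymp\ \E\left\{X^{-(\beta-n/2)}e^{-u^2/2\gamma^2 X}\right\}.
\eeqq

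Making this replacement rigorous is the step I expect to be the main obstacle, and I would carry it out exactly as in \eqref{difference1:equation}--\eqref{e:bound.diff1}. Writing $X^{n/2}-\Gamma_1^{-n/\alpha}=(\Gamma_1^{-2/\alpha}+X')^{n/2}-(\Gamma_1^{-2/\alpha})^{n/2}$ and bounding this increment by a sum of terms of the form $(\Gamma_1^{-2/\alpha})^{(n-m)/2}(X')^{m/2}$, $1\le m\le n$, the reduction comes down to showing that each
\beqq
\E\left\{(\Gamma_1^{-2/\alpha})^{(n-m)/2}(X')^{m/2}X^{-\beta}e^{-u^2/2\gamma^2 X}\right\}=o\!\left(u^{-(\alpha+2\beta-n)}\right).
\eeqq
Splitting this expectation according to whether $(X')^{1/2}$ exceeds $\vep u$ or not, using the elementary inequality $\sigma^{-2\theta}e^{-u^2/\sigma^2}\le cu^{-2\theta}$ to trade powers of $X$ for powers of $u$, and invoking the tail bound $\P(X'>v)=o(v^{-\alpha/2})$ (which, as in the harmonisable proof, holds because discarding the first arrival strictly lightens the tail), one controls the large-$X'$ piece; the complementary piece is bounded by a constant multiple of $\vep\,u^{-(\alpha+2\beta-n)}$ after using that $X$ has a density of order $x^{-(1+\alpha/2)}$, and letting $\vep\downarrow0$ completes the reduction. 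The hypothesis $\beta>n/2$ is exactly what keeps the limiting exponent $\beta-n/2$ positive and all the integrals below convergent.

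It then remains to evaluate $\E\{X^{-(\beta-n/2)}e^{-u^2/2\gamma^2 X}\}$, and this is the Tauberian estimate \eqref{e:tauberian}, now read for the $\alpha/2$-stable variable $X=\sum_k\Gamma_k^{-2/\alpha}$, with the exponent $\beta$ there replaced by $2\beta-n$ and the scale $\sigma_g$ replaced by $\gamma$. Substituting $\beta\mapsto 2\beta-n$ turns the power $u^{-(\alpha+\beta)}$ into $u^{-(\alpha+2\beta-n)}$, the prefactor $2^{(\alpha+\beta-2)/2}$ into $2^{(\alpha+2\beta-n-2)/2}$, the scale factor $\sigma_g^{\alpha+\beta}$ into $\gamma^{\alpha+2\beta-n}$, and the Gamma factor $\Gamma((\alpha+\beta)/2)$ into $\Gamma(\beta+\smallhalf(\alpha-n))$, while the constant $\alpha\, C_{\alpha/2}\,\sigma_\alpha^{\alpha/2}$ is carried along unchanged. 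This reproduces \eqref{bound:last} exactly, and one checks that the condition $\beta>-\alpha$ required by Lemma \ref{l:tauberian} is implied by $\beta>n/2$.
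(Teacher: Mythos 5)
Your proposal is correct and reproduces the constants in \eqref{bound:last} exactly, but it certifies the key replacement $\Gamma_1^{-n/\alpha}\approx X^{n/2}$ by a different mechanism than the paper's own proof. The shared part is the upper bound: since $\Gamma_1^{-2/\alpha}\le X$, the expectation is at most $\E\{X^{-(\beta-n/2)}e^{-u^2/2\gamma^2X}\}$, and Lemma \ref{l:tauberian}, read with exponent $2\beta-n$ and scale $\gamma$, gives precisely the right-hand side of \eqref{bound:last}; this is also the paper's first step. You then close the gap \emph{additively}, showing $\E\{(X^{n/2}-\Gamma_1^{-n/\alpha})X^{-\beta}e^{-u^2/2\gamma^2X}\}=o(u^{-(\alpha+2\beta-n)})$ via the $\sqrt{x+y}\le\sqrt{x}+\sqrt{y}$ expansion and the $\vep u$-truncation recycled from \eqref{e:bound.diff}--\eqref{e:bound.diff1}. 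The paper instead proves a matching \emph{lower} bound: it restricts to the event that an independent copy $Y$ of $X$, dominating the remainder $\sum_{j\ge2}\Gamma_j^{-2/\alpha}$, satisfies $Y\le\vep\Gamma_1^{-2/\alpha}$, factors out $\P\{Y\le M\}$ by independence, and then uses the fact that $\Gamma_1^{-2/\alpha}$ has exactly the same tail behaviour as $X$, so that the \emph{proof} (not merely the statement) of Lemma \ref{l:tauberian} applies with $\Gamma_1^{-2/\alpha}$ in place of $X$; sending $\vep\to0$ and $M\to\infty$ closes the sandwich. Your route yields an explicit additive error rate and stays within machinery already built for Theorem \ref{theorem-harmonisable}; the paper's route is shorter but requires reopening Lemma \ref{l:tauberian} to check that only tail behaviour enters it.

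One caution about executing your error estimate ``exactly'' as in the harmonisable proof: if, on the large-$X'$ event, you peel off a factor $(X')^{1/2}$ and bound the rest by $cu^{-(2\beta-n+1)}$, you are left needing $\E\{(X')^{1/2}\indic_{(X')^{1/2}>\vep u}\}=o(u^{1-\alpha})$, which is vacuous for small $\alpha$ (for $\alpha\le 1/2$ the variable $(X')^{1/2}$ has infinite mean, and with only the stated tail bound $\P\{X'>v\}=o(v^{-\alpha/2})$ the step already needs $\alpha>1$). The robust version inside your own framework is to bound \emph{all} factors by powers of $X$ (using $\Gamma_1^{-2/\alpha}\le X$ and $X'\le X$), apply the elementary inequality $X^{-(\beta-n/2)}e^{-u^2/2\gamma^2X}\le cu^{-(2\beta-n)}$ pointwise -- this is exactly where $\beta>n/2$ is used -- and retain $\P\{X'>\vep^2u^2\}=o(u^{-\alpha})$ as a separate factor; this works for every $\alpha\in(0,2)$ and gives the required $o(u^{-(\alpha+2\beta-n)})$.
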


\begin{proof}
The proof will proceed by establishing asymptotic upper and lower
bounds for the expectation, which we shall denote by $I_u$.

For the upper bound,  note that since $\Gamma_1^{-2/\alpha}\leq X$ it
is immediate that
\beq
\label{equn:last}
I_u \leq \E\left\{  X^{-(\beta- n/2)}e^{-u^2/2\gamma^2 X}\right\}.
\eeq
However, the asymptotics of the right hand side are covered by Lemma
\ref{l:tauberian}, and so we have that the right hand side of
\eqref{bound:last} provides an upper bound for the asymptotics of $I_u$.

For the lower bound, fix $\e >0$ and again exploit the fact that
$\Gamma_1^{-2/\alpha}\leq X$ to see that
\beqq
I_u &\geq& \E\left\{\Gamma_1^{-n/\alpha}
X^{-\beta}e^{-u^2/2\gamma^2\Gamma_1^{-2/\alpha}} \right\}\\
&\geq& (1+\epsilon)^{-\beta}
\E\left\{ \Gamma_1^{-n/\alpha}
(\Gamma_1^{-2/\alpha})^{-\beta}e^{-u^2/2\gamma^2\Gamma_1^{-2/\alpha}}
\indic_{\sum_{j=2}^\infty \Gamma_j^{-2/\alpha}\leq \e\Gamma_1^{-2/\alpha}}\right\}
\\ &\geq& (1+\epsilon)^{-\beta}
\E\left\{ \Gamma_1^{-n/\alpha}
(\Gamma_1^{-2/\alpha})^{-\beta}e^{-u^2/2\gamma^2\Gamma_1^{-2/\alpha}}
\indic_{Y \leq \e \Gamma_1^{-2/\alpha}}\right\},
\eeqq
where $Y$ is a copy of $X$ independent of $\Gamma_1$. Now fix
$M>0$ and note
\beqq
%\quad
I_u &\geq& (1+\e )^{-\beta}\P\{X\leq M\} \E\left\{ \Gamma_1^{-n/\alpha}
(\Gamma_1^{-2/\alpha})^{-\beta}e^{-u^2/2\gamma^2\Gamma_1^{-2/\alpha}}
\indic_{\Gamma_1^{-2/\alpha}\geq M/\e}\right\}\\
&=& (1+\e )^{-\beta}\P\{X\leq M\} \E\left\{
(\Gamma_1^{-2/\alpha})^{-(\beta-n/2)}e^{-u^2/2\gamma^2\Gamma_1^{-2/\alpha}}
\indic_{\Gamma_1^{-2/\alpha}\geq M/\e}\right\}
.
\eeqq
The expectation here is similar to that in \eqref{equn:last} and Lemma  \ref{l:tauberian},
with $\Gamma_1^{-2/\alpha}$ replacing $X$. However, these two random variables have
precisely the same tail behavior, and a check of the proof of Lemma  \ref{l:tauberian}
(cf.\ Lemma 2.2 of \cite{AdlerSam97}) shows that this is all that entered into the
asymptotic behavior of the expectation. Hence
\beqq
I_u \stackrel{>}{\sim}
(1+\e )^{-\beta}\P\{X\leq M\}
 \E\left\{  X^{-(\beta+ n/2)}e^{-u^2/2\gamma^2 X}\right\}.
\eeqq
%\E\left\{ \Gamma_1^{-n/\alpha}
%(\Gamma_1^{-2/\alpha})^{-\beta}e^{-u^2/2\gamma^2\Gamma_1^{-2/\alpha}}
%\indic_{ \Gamma_1^{-2/\alpha}\geq M/\e}\right\})
%\asymp \frac{\alpha}{2} \Gamma((\alpha+n)/2+\beta)
%u^{-(\alpha+n+2\beta)}.
%\eeqq
Sending  $\e\to 0$ and $M\to\infty$ completes the proof.
\qed
\end{proof}

\section{On the mean \LKC s of excursion sets}
\label{section:isotropic}
Throughout this paper, we have concentrated on the expected Euler characteristics of
excursion sets. However, at least in the isotropic cases, these results are immediately
extendable to expected \LKC s, via a result known as Crofton's formula.

To state Crofton's formula we start with
the affine Grassmanian $\graff(N,k)$ of all $k$-dimensional flats in $\real^N$;
{\it viz.}\ of all $k$-dimensional linear subspaces of $\real^N$  not necessarily
passing through the origin. On $\graff(N,k)$ there is a natural Haar measure, known
as kinematic measure, which we denote by $\lambda_k^N$. Its precise definition and
normalisation will not be important to us.  Crofton's formula states that
\beq
\label{crofton:formula}
\int_{\graff(N,N-k)} \lips_j(M \cap V) \; d\lambda^N_{N-k}(V) =
\sqbinom{k+j}{j} \lips_{k+j}(M).
\eeq

Now let $f$ be one of the stable random fields of this paper, and $M$ a compact, convex set
in $\RN$. If $f$ is isotropic, then, recalling that the Euler functional $\p$ is
also $\lips_0$, all of our results can be written in the form
\beq
\label{generic}
\E \left\{  \lips_0 (A_u(f,M))\right\}
= \sum_{k=0}^N C_{k}(u) \lips_k(M),
\eeq
for some functions $ C_{k}(u)$. The $ C_{k}(u)$ depend on the parameters of $f$ and,
quite often, are identically zero. What is important, however, is that they are
dependent neither  on $M$ nor on $N=\dim (M)$.

Since if $M$ is compact and convex so is $M\cap V$, for any $V\in \graff(N,k)$.
Thus, exploiting Crofton's formula twice, and \eqref{generic} once, we have
\beq
\E \left\{  \lips_j(A_u(f,M))\right\}
 &=& \E \left\{ \int_{\graff(N,N-j)}
\lips_0\left(A_u(f,M) \cap V\right)
\, d\lambda^N_{N-j}(V) \right \}  \notag \\  \label{croftontrick}
&=&
\int_{\graff(N,N-j)}
\E \left\{\lips_0\left(A_u(f,M) \cap V\right) \right \}
\, d\lambda^N_{N-j}(V) \\
&=& \int_{\graff(N,N-j)}
\sum_{k=0}^N C_{k}(u) \lips_k(M\cap V)
 \, d\lambda^N_{N-j}(V) \notag \\
&=& \sum_{k=0}^{N} C_{k}(u) \int_{\graff(N,N-j)}
\lips_k(M \cap V)  \, d\lambda^N_{N-j}(V) \notag \\
%&\qquad\quad=\ \sum_{l=0}^{N-j} \sqbinom{j+l}{l} \rho_l(u)  \lips_{j+l}(i(M))  \\
&=& \sum_{k=0}^{N-j} \sqbinom{j+k}{k} C_{k}(u)  \lips_{j+k}(M), \notag
\eeq
the change on the range of summation coming from the fact that $\lips_l(M)\equiv 0$ for all
$l>N$. 

Using this, all the formulae in this paper for the asymptotics of mean Euler characteristics,
in the isotropic cases, can be extended to asymptotics for mean \LKC s.

Of course, something has to be said about justifying the change of order of
integration and expectation at \eqref{croftontrick}, which we shall do in
Lemma \ref{lemma:croftontrick} below.

\section{Appendix: On the finiteness of expectations}
\label{section:appendix2}
A  serious technical point which we avoided throughout the paper
was justifying the conditional expectation arguments
\beq
\label{appendix:expectations}
\E\left\{\p \left(A_u(f,T)\right)\right\}=
\E\left\{\E\left\{\p \left(A_u(f,T)\right)\,\big|\,\cal F\right\}\right\}
\eeq
where $f$ was stable, and $\cal F$ was the information we
 assumed to make $f$ conditionally Gaussian.

The need for justification lies in the fact that \EC s need not be
positive. (Were they positive, \eqref{appendix:expectations} would always
hold, with both sides being finite or infinite together.) We shall show that
\beq
\label{appendix:absolute:expectations}
\E\left\{|\p \left(A_u(f,T)\right)|\right\} < \infty,
\eeq
which is sufficient for \eqref{appendix:expectations}, 
restricting ourselves to rectangular parameter spaces only. The same
arguments  can also be applied for general compact domains, the only
difference being a heavier investment in notation.

Before starting the proof of \eqref{appendix:absolute:expectations},
 we note that all the random fields that we considered in
this paper are suitably regular Morse functions, in the terminology of
Chapter 6 of \cite{RFG}. This follows from a Fubini argument and
the fact that, with probability one the conditionally
Gaussian random fields on which we based all our calculations are of 
this kind. This, in turn,  follows from the fact that they all satisfy
the conditions of Theorem \ref{rgeometry:EEC:theorem}, which, by 
Corollary 11.3.2 of \cite{RFG}, implies that they are suitably regular Morse
functions.

Turning now to the proof of  \eqref{appendix:absolute:expectations},
let $J$, as usual, be a $n$-dimensional facet of a $N$-dimensional rectangle $T$. Then, for a Morse function,
 the \EC\ of the excursion set $A_u(f,T)$
can be represented as an alternating sum, over all $J$,  of  the
numbers of critical points of various indices
of $f_{|J}$ over  $A_u(f_{|J},J)$ (cf.\ Section 9.4 of \cite{RFG}).
This sum is trivially no greater, in absolute value, than the
total number, over all $J$, of the number of critical points of $f_{|J}$ over  $J$.
Thus, in order to establish \eqref{appendix:absolute:expectations}, it
will suffice to show that for every facet $J$ 
\beqq
%\label{appendix:maxima:expectations}
\E\left\{\E\left\{ N(f,J)\,\big|\,\cal F\right\}\right\} < \infty,
\eeqq
where $N(f,J)$ is the number of critical points of $f$ when restricted to $J$.

In order to compute the inner expectation, recall $f$ is conditionally a stationary Gaussian
random field in all the cases of interest to us. Call this process $\hf$. Then
it follows immediately from
Theorem 11.2.1 of \cite{RFG} and the independence of first and second order partial
derivatives of Gaussian fields (cf.\ \cite{RFG} Section 11.7)  that
\beq
\label{appendix:gaussian:expectations}
\E\left\{ N(\hf,J)\, \big| \, \cal F\right\} =
\frac{\E\left\{\big|\det \nabla^2\hf_{|J}\big| \right\}}{\det \Sigma_{\hf_{|J}}},
\eeq
where $ \nabla^2\hf_{|J}$ is the $n\times n$ matrix of second order derivatives of $\hf_{|J}$ and
$\Sigma_{\hf_{|J}}$ the $n\times n$
covariance matrix of its first order derivatives.

With these preliminaries behind us, we can begin establishing the two main results of this
appendix, Lemmas \ref{appendix:subgauss} and \ref{lemma:harmonisable-finite}.

\begin{lemma}
\label{appendix:subgauss}
Let $f$ be a sub-Gaussian process satisfying the conditions of
Theorem \ref{theorem-subgaussian}. Then, for every $u\in \real$,
%$A_u(f,T)$ is well defined and
$\E\{|\p (A_u(f,T))|\}$ is finite.
\end{lemma}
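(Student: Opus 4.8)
The plan is to build directly on the reduction already established in the preamble to this appendix. For a suitably regular Morse function the absolute value of the Euler characteristic of $A_u(f,T)$ is bounded by $\sum_J N(f,J)$, the sum over the finitely many facets $J$ of $T$ of the number of critical points of $f_{|J}$ over $J$. Since this is a finite sum, it suffices to prove that $\E\{N(f,J)\} < \infty$ for each fixed $n$-dimensional facet $J$.

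The observation that trivialises the sub-Gaussian case is that $f = X^{1/2}g$ with $X>0$ almost surely, and multiplication by a strictly positive scalar does not move critical points: $\nabla\bigl(X^{1/2}g_{|J}\bigr) = X^{1/2}\nabla g_{|J}$ vanishes precisely where $\nabla g_{|J}$ does. Hence $N(f,J) = N(g,J)$ as random variables on the full-probability event $\{X>0\}$, so that $\E\{N(f,J)\} = \E\{N(g,J)\}$. Equivalently, and in the spirit of the conditioning used throughout, one may condition on $X$ and apply the Gaussian identity \eqref{appendix:gaussian:expectations} to $\hf = f\,|\,X = X^{1/2}g$: the Hessian scales as $\nabla^2\hf_{|J} = X^{1/2}\nabla^2 g_{|J}$, contributing a factor $X^{n/2}$ to $\E\{|\det\nabla^2\hf_{|J}|\}$, while the gradient covariance scales as $\Sigma_{\hf_{|J}} = X\,\Sigma_{g_{|J}}$, contributing an exactly compensating factor through its determinant. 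The $X$-dependence therefore cancels, leaving $\E\{N(f,J)\,|\,X\} = \E\{N(g,J)\}$, a deterministic constant, whence the outer expectation over $X$ gives the same value.

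Finiteness of $\E\{N(g,J)\}$ is then immediate. The field $g$ satisfies all the hypotheses of Theorem \ref{rgeometry:EEC:theorem} — it is stationary, almost surely $C^2$, and the joint distribution of $g$ and its first and second derivatives is non-degenerate — so the classical Kac--Rice bound yields a finite expected number of critical points of $g_{|J}$ over the compact facet $J$. Summing the finitely many facet contributions gives
\[
\E\left\{\left|\p\left(A_u(f,T)\right)\right|\right\} \ \leq\ \sum_J \E\{N(g,J)\} \ <\ \infty ,
\]
as required.

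I do not expect a genuine analytic obstacle here. The only points needing care are to confirm that the scaling cancellation is exact (so that $\E\{N(f,J)\,|\,X\}$ is genuinely free of $X$) and that the number of facets is finite — both routine — so that, in contrast to the harmonisable case of Lemma \ref{lemma:harmonisable-finite}, the difficulty is bookkeeping rather than estimation.
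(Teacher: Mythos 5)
Your proposal is correct and follows essentially the same route as the paper: bound $|\p(A_u(f,T))|$ by the total number of critical points over the facets of $T$, then use the scaling $f=X^{1/2}g$ to reduce $\E\{N(f,J)\}$ to the purely Gaussian quantity $\E\{N(g,J)\}$, which is finite by the Kac--Rice formula under the non-degeneracy hypotheses. Your opening pathwise observation --- that since $X>0$ a.s.\ the critical points of $X^{1/2}g_{|J}$ coincide with those of $g_{|J}$, so $N(f,J)=N(g,J)$ identically --- is a slightly cleaner shortcut than the paper's conditional computation (which runs the Kac--Rice ratio and cancels the powers of $X$), but it is the same idea and lands in the same place.
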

\begin{proof}
Recall that in the sub-Gaussian case $f$ is no more that $X^{1/2}g$,
where $g$ is Gaussian and 
$X$ is positive $\alpha/2$-stable. Thus,  given $X=x$,
 the conditioned process $\hf$ of \eqref{appendix:gaussian:expectations}
is no more that $x^{1/2}g$. It therefore follows that for a facet $J$
of dimension $n$, 
\beqq
\E\left\{ N(f,J)\right\} &=& \E\left\{\E\left\{ N(\hf,J)\big|
X\right\}\right\}\\ 
&=& \E\left\{\left.
\frac{\E\left\{\left|\det \nabla^2\hf_{|J}\right| \right\}}{\det
  \Sigma_{\hf_{|J}}} \ \right|\ X \right\}\\
&=&
\E\left\{\left.
\frac{ X^{n/2} \E\left\{\left|\det \nabla^2g_{|J}\right| \right\}}{X^{n/2}\det \Sigma_{g_{|J}}}
\ \right|\  X \right\}
\\ &=&
\frac{\E\left\{\left|\det \nabla^2g_{|J}\right| \right\}}{\det \Sigma_{g_{|J}}}.
\eeqq
The last expression is purely Gaussian, and under the assumed
conditions of non-degeneracy, also clearly finite, so we are done.
\qed

\end{proof}

Before turning to concatenated-harmonisable processes we require a
technical lemma.

\begin{lemma}
\label{technical}
 Let $A$ and $B$ be two $n\times n$ matrices, and
suppose that $A$ is of rank $m\leq n$. Then each term in the standard Laplace
 expansion of  $\det(A+B)$ involves at most $m$ elements of $A$.
\end{lemma}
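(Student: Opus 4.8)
The plan is to expand $\det(A+B)$ by multilinearity of the determinant in its columns, and then use the rank hypothesis to annihilate all the high-order terms. Writing $a_1,\dots,a_n$ and $b_1,\dots,b_n$ for the columns of $A$ and $B$, the $i$-th column of $A+B$ is $a_i+b_i$, and column-multilinearity gives
\[
\det(A+B)=\sum_{S\subseteq\{1,\dots,n\}}\det\big(C_S\big),
\]
where $C_S$ is the matrix whose $i$-th column is $a_i$ if $i\in S$ and $b_i$ if $i\notin S$. This is exactly the standard expansion underlying the earlier bound $\det(A+B)\le\det(A)+\sum_{m}C_{nm}\|A\|^{n-m}\|B\|^{m}$: the term $\det(C_S)$ collects precisely those monomials of the Leibniz expansion of $\det(A+B)$ whose factors drawn from $A$ occupy the columns indexed by $S$.

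First I would record the elementary bookkeeping point that makes the statement precise. Because each monomial in the Leibniz expansion is a product $\prod_i(\cdot)_{i\sigma(i)}$ over a permutation $\sigma$, any single monomial draws at most one entry from each column; hence a monomial contributing to $\det(C_S)$ uses entries of $A$ from exactly the $|S|$ columns in $S$, i.e.\ exactly $|S|$ elements of $A$. Thus controlling the number of $A$-columns that can appear controls the number of elements of $A$ that appear, and I would fix the meaning of ``term'' in the statement to be the column-multilinear term $\det(C_S)$, so that ``involves at most $m$ elements of $A$'' becomes the assertion that at most $m$ columns of $A$ enter.

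The key step is the vanishing of the high-order terms. If $|S|>m$, then $C_S$ has among its columns the $|S|$ columns $\{a_i:i\in S\}$ of $A$; since $\mathrm{rank}(A)=m<|S|$, any $|S|$ columns of $A$ are linearly dependent, so the columns of $C_S$ are linearly dependent and $\det(C_S)=0$. Equivalently, Laplace-expanding $\det(C_S)$ along the columns in $S$ produces $|S|\times|S|$ minors of $A$, every one of which vanishes once $|S|$ exceeds the rank. Consequently only the terms with $|S|\le m$ survive, and each surviving term involves at most $m$ columns — hence at most $m$ elements — of $A$, which is the claim.

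There is no serious obstacle here: the content is elementary linear algebra, and the whole lemma reduces to the single fact that more than $m$ columns of a rank-$m$ matrix are linearly dependent. The only point requiring genuine care is matching the informal phrasing of the statement to a definite expansion, which I would handle by phrasing everything through the column-multilinear terms $\det(C_S)$ as above; once that identification is made, the rank argument is immediate.
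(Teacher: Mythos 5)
Your proof is correct, and while it rests on the same core mechanism as the paper's (multilinear expansion of the determinant, with the rank hypothesis forcing high-order terms to vanish), the execution is genuinely different and arguably cleaner. The paper proves the lemma only for $m=1$: writing each row of the rank-one matrix $A$ as $\theta_j a$ for a single vector $a$, it expands $\det(A+B)$ row by row, kills every determinant containing two copies of $a$, and arrives at $\det(A+B)=\det B+\sum_{k=1}^n\theta_k\det\bigl(b_1,\dots,b_{k-1},a,b_{k+1},\dots,b_n\bigr)$; the general case is then dispatched in one sentence by decomposing a rank-$m$ matrix as a sum of $m$ rank-one matrices. You instead expand over all column subsets $S$ in a single step and observe that any term with $|S|>m$ vanishes because more than $m$ columns of a rank-$m$ matrix are linearly dependent. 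This buys two things. First, general $m$ is handled directly, with no decomposition and no iteration. Second, it closes a small gap in the paper's reduction: entries of the rank-one summands $A_j$ are not literally entries of $A$, so the paper's passage from $m=1$ to general $m$ needs a word of justification that your argument never requires. Your explicit fixing of what ``term'' means --- the multilinear term $\det(C_S)$, equivalently the group of Leibniz monomials whose $A$-entries sit exactly in the columns indexed by $S$ --- is also welcome, since the lemma's phrase ``standard Laplace expansion'' is informal; and it is precisely this subset-indexed form that is used later in the proof of Lemma \ref{lemma:harmonisable-finite}, where the surviving terms of $\det\bigl(\sum_k\Gamma_k^{-1/\alpha}G_kW_k\bigr)$ may draw at most $N'$ rows from any single $W_k$.
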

\begin{proof}
We shall prove the result for $m=1$. The result for general $m$ then follows
by writing a matrix of rank $m$ as the sum of $m$ matrices of rank one.

 Since $A$ has rank one, there is a vector $a$ and numbers $\theta_1,\ldots,
\theta_n$ such that the $j$-th row of $A$ is $\theta_ja$,
$j=1,\ldots, n$. Let $b_j$ be the $j$-th row of  $B$.
 Writing the determinant of $A+B$ as a function of its rows  we have
\beqq
\det(A+B) &=& \det\bigl( \theta_1a+b_1, \theta_2a+b_2,\ldots,
\theta_na+b_n\bigr)
\\
&=& \theta_1 \det\bigl( a, \theta_2a+b_2,\ldots,
\theta_na+b_n\bigr)\\ &&\qquad\qquad + \det\bigl( b_1, \theta_2a+b_2,\ldots,
\theta_na+b_n\bigr).
\eeqq
Note that
\beqq
&&\theta_1\det\bigl( a, \theta_2a+b_2,\ldots,
\theta_na+b_n\bigr) \\
&&\qquad\qquad= \theta_1\theta_2 \det\bigl( a, a,\ldots,
\theta_na+b_n\bigr) + \theta_1\det\bigl( a, b_2,\ldots,
\theta_na+b_n\bigr)\\
&&\qquad\qquad=  \theta_1\det\bigl( a, b_2,\ldots, \theta_na+b_n\bigr) \\
%&&\qquad\qquad= \det\bigl( a, b_2,\ldots, \theta_na+b_n\bigr)
%\\ &&\qquad\qquad= \theta_3 \det\bigl( a, b_2,a, \ldots, \theta_na+b_n\bigr)
%+ \det\bigl( a, b_2,b_3\ldots, \theta_na+b_n\bigr)\\
%&&\qquad\qquad= 0 + \det\bigl( a, b_2,b_3\ldots, \theta_na+b_n\bigr)
%\\ &&\qquad\qquad= \det\bigl( a, b_2,b_3\ldots, \theta_na+b_n\bigr)
 &&\qquad\qquad= \ldots = \theta_1 \det\bigl( a, b_2,b_3\ldots, b_n\bigr).
\eeqq
Similarly,
\beqq
\det\bigl( b_1, \theta_2a+b_2,\ldots,
\theta_na+b_n\bigr) = \sum_{k=2}^n \theta_k \det\bigl( b_1,\ldots,
b_{k-1}, a, b_{k+1},\ldots, b_n\bigr),
\eeqq
and continuing this process leads to
\beqq
\det(A+B) = \det B + \sum_{k=1}^n \theta_k \det\bigl( b_1,\ldots,
b_{k-1}, a, b_{k+1},\ldots, b_n\bigr),
\eeqq
from which the result follows.
\qed
\end{proof}

An immediate consequence of Lemma  \ref{technical}  is
\begin{corollary}
\label{appendix:corollary}
Let $A_1,\dots,A_n$ be $n\times n$
matrices of rank $m\leq n$. Then, for any  real $r_1,\ldots, r_n$,
\beqq
\det\Bigl( \sum_{j=1}^n r_jA_j\Bigr) = r_1^m\ldots r_n^m\, \det\Bigl(
\sum_{j=1}^n A_j\Bigr).
\eeqq
\end{corollary}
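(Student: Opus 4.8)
The plan is to extract the identity from the polynomial structure of $\det\bigl(\sum_{j=1}^n r_j A_j\bigr)$ in the variables $r_1,\dots,r_n$, using Lemma \ref{technical} purely to bound the exponents. Writing $M \definedas \sum_{j=1}^n r_j A_j$, its $(p,q)$ entry is the linear form $\sum_{j=1}^n r_j (A_j)_{pq}$, so the Leibniz expansion displays $\det(M)$ as a polynomial in $(r_1,\dots,r_n)$ that is homogeneous of total degree exactly $n$, each permutation term being a product of $n$ linear factors.

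I would then fix an index $j$ and apply Lemma \ref{technical} with $A = r_j A_j$ and $B = \sum_{i\neq j} r_i A_i$. Since $r_j A_j$ has rank at most $m$, every term in the Laplace expansion of $\det(M)$ uses at most $m$ entries of $r_j A_j$ and hence carries at most $m$ factors of $r_j$; so $\det(M)$ has degree at most $m$ in $r_j$. As $j$ was arbitrary, $\det(M)$ is a polynomial whose degree in each variable separately is at most $m$. Equivalently, one may first split each $A_j$ into a sum of $m$ rank-one matrices, exactly as in the proof of Lemma \ref{technical}, and track that each rank-one piece contributes at most one factor to any surviving term.

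Combining homogeneity of total degree $n$ with the per-variable bound then fixes the monomial content. In the rank-one case $m=1$, which is precisely the situation of the outer-product matrices $W_k(J)$ in the harmonisable setting, the only monomial of total degree $n$ in $n$ variables with every exponent at most $1$ is $r_1 r_2\cdots r_n$, so $\det(M) = c\, r_1^m\cdots r_n^m$ for a scalar $c$ independent of the $r_j$. Evaluating at $r_1=\cdots=r_n=1$ then identifies $c = \det\bigl(\sum_{j=1}^n A_j\bigr)$, which is the asserted formula.

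The delicate point, and the step I expect to require the most care, is this last reconciliation of the degree data: the bound from Lemma \ref{technical} only caps each exponent, so one must invoke homogeneity --- and in particular the matching of the total degree $n$ against the exponent pattern $r_1^m\cdots r_n^m$ --- to exclude every competing monomial. This matching is clean exactly when the rank structure forces a single admissible monomial, so I would state the rank hypothesis and this degree bookkeeping explicitly rather than letting the word ``immediate'' carry it.
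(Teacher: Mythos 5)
You have proved the corollary exactly as far as it can be proved, and your route is the right formalization of what the paper leaves unsaid: the paper offers no written argument here at all (the corollary is introduced only as an ``immediate consequence'' of Lemma~\ref{technical}), and your packaging --- per-variable degree at most $m$ via Lemma~\ref{technical}, homogeneity of total degree $n$, then evaluation at $r_1=\cdots=r_n=1$ to identify the constant --- is precisely the bookkeeping that ``immediate'' is supposed to hide. For $m=1$ your proof is complete and correct.

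But the point you flag as delicate is not a gap in your argument; it is an error in the statement. For $m\geq 2$ the corollary as written is false: the left-hand side is homogeneous of degree $n$ in $(r_1,\dots,r_n)$ while the right-hand side is homogeneous of degree $nm$, so the two can coincide identically only when both vanish or $m=1$. Concretely, with $n=2$ and $A_1=A_2=I_2$ (so $m=2$), the left side is $(r_1+r_2)^2$ while the right side is $4r_1^2r_2^2$. What is true --- and what Lemma~\ref{lemma:harmonisable-finite} actually uses, since there one has $d$ matrices of rank at most $N'$ together with one matrix of rank at most $n-dN'$ --- is the following: if $A_1,\dots,A_p$ are $n\times n$ matrices with ${\rm rank}(A_j)\leq m_j$ and $\sum_{j=1}^p m_j=n$, then
\[
\det\Bigl(\sum_{j=1}^p r_jA_j\Bigr)=r_1^{m_1}\cdots r_p^{m_p}\,\det\Bigl(\sum_{j=1}^p A_j\Bigr).
\]
Your degree argument proves exactly this statement with no extra work: $\det\bigl(\sum_j r_jA_j\bigr)$ is homogeneous of degree $n$ (or identically zero), Lemma~\ref{technical} caps its degree in $r_j$ at $m_j$, and since $\sum_j m_j=n$ the only admissible monomial is $r_1^{m_1}\cdots r_p^{m_p}$; evaluation at $r_1=\cdots=r_p=1$ then fixes the coefficient as $\det\bigl(\sum_j A_j\bigr)$. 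So rather than trying to close the $m\geq 2$ case as stated, you should restate the corollary in this rank-budget form; that is the version the application needs, and your proof is a correct proof of it.
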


We now have what we need to prove our last lemma.

\begin{lemma}
\label{lemma:harmonisable-finite}
Let $f$ be a harmonisable, or concatenated-harmonisable,  
\sas\ random field satisfying
 the conditions of  Theorem \ref{theorem-harmonisable},
 or Theorem \ref{theorem-concat}, respectively. Then
%$A_u(f,T)$ is well defined and
$\E\{|\p (A_u(f,T))|\}<\infty$,  for every $u\in \real$.
\end{lemma}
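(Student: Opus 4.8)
The plan is to run the same conditioning argument used for Lemma \ref{appendix:subgauss}, with the scalar multiplier $X$ there replaced by the random series driving the two families. As set up in the paragraph preceding \eqref{appendix:gaussian:expectations}, for a suitably regular Morse function one has $|\p(A_u(f,T))|\le\sum_J N(f,J)$, the sum of the numbers of critical points of the restrictions $f_{|J}$ over the facets $J$ of $T$. It therefore suffices to show $\E\{N(f,J)\}<\infty$ for each fixed facet $J$, say of dimension $n$. Writing $\cal F$ for the $\sigma$-field generated by $\{\Gamma_k\}$ together with $\{\omega_k\}$ (resp.\ $\{\omega_{k\ell}\}$), the conditioned field $\hf$ is stationary Gaussian, so by \eqref{appendix:gaussian:expectations} we have $\E\{N(f,J)\}=\E\{\E\{N(\hf,J)\mid\cal F\}\}$, and all quantities are nonnegative, allowing free use of Tonelli.

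The key structural point is that, exactly as $f=X^{1/2}g$ in the sub-Gaussian case, the conditioned field factors as $\hf=\gamma_\alpha S^{1/2}\bar f$ (the harmless constant $(N')^{1/2}$ being absorbed in the concatenated case), where $S=\sum_k\Gamma_k^{-2/\alpha}$ and $\bar f$ is a conditionally Gaussian field of unit variance built from the normalised weights $p_k=\Gamma_k^{-2/\alpha}/S$. The gradient covariance of $\hf_{|J}$ then scales like $S$ while the Hessian entries scale like $S^{1/2}$, so the overall scale $S$ cancels between numerator and denominator of \eqref{appendix:gaussian:expectations}, precisely as in the sub-Gaussian computation, leaving
\[
\E\{N(\hf,J)\mid\cal F\}=\frac{\E\{|\det\nabla^2\bar f_{|J}|\}}{(\det\bar\Sigma_J)^{1/2}},
\]
a quantity depending on $\cal F$ only through the weights $p_k$ and the frequencies. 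Here, writing $\omega_k^J=(\omega_k(i))_{i\in\sigma(J)}$, the normalised gradient covariance is $\bar\Sigma_J=\sum_k p_k\,\omega_k^J(\omega_k^J)^{\top}$ (harmonisable) or $\bar\Sigma_J=\sum_k p_k\,W_k(J)=\sum_{k,\ell}p_k\,\omega_{k\ell}^J(\omega_{k\ell}^J)^{\top}$ (concatenated). The numerator is bounded uniformly in $\cal F$: since $\mu$ has compact support, all frequencies obey $|\omega_k(i)|\le R$, the entries of $\nabla^2\bar f_{|J}$ have conditional variances bounded by $N'R^4$, and hence $\E\{|\det\nabla^2\bar f_{|J}|\}\le C_J<\infty$.

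Everything thus reduces to the negative-moment bound $\E\{(\det\bar\Sigma_J)^{-1/2}\}<\infty$, which I expect to be the main obstacle. Because there are infinitely many independent frequencies, each carrying a non-vanishing absolutely continuous component, $\bar\Sigma_J$ is a.s.\ of full rank $n$, so $\det\bar\Sigma_J>0$ a.s.; the difficulty is the quantitative anti-concentration near $0$. I would expand it by Cauchy--Binet, which in the unified form $\bar\Sigma_J=\sum_\iota q_\iota v_\iota v_\iota^{\top}$ over a single index $\iota$ (with $q_\iota\in\{p_k\}$, $v_\iota\in\{\omega_{k\ell}^J\}$) reads
\[
\det\bar\Sigma_J=\sum_{\iota_1<\cdots<\iota_n}\Big(\prod_{r=1}^n q_{\iota_r}\Big)\big(\det[v_{\iota_1}\,|\,\cdots\,|\,v_{\iota_n}]\big)^2,
\]
the bookkeeping of ranks (each $W_k(J)$ has rank $\min(n,N')$, so only $n$-tuples drawn from distinct rank-one pieces survive) being exactly what Lemma \ref{technical} and Corollary \ref{appendix:corollary} supply. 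A single summand is not enough, since the bounded density of $v_\iota$ makes $\E|\det[v_{\iota_1}|\cdots|v_{\iota_n}]|^{-1}$ diverge; the point is that $\det\bar\Sigma_J$ is a sum of many positive terms, and it is this that yields the required anti-concentration.

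To make this precise I would use the representation $\E\{(\det\bar\Sigma_J)^{-1/2}\}=\pi^{-1/2}\int_0^\infty t^{-1/2}\E\{e^{-t\det\bar\Sigma_J}\}\,dt$, conditioning on the weights and integrating out the frequencies to obtain super-polynomial decay of the conditional transform for large $t$ once at least two well-conditioned blocks are present. The scale cancellation identified above is what controls the $\Gamma_k$-dependence: all residual dependence enters only through the scale-invariant ratios $\Gamma_k/\Gamma_1$, whose moments are exactly of the type handled in Lemma \ref{l:tauberian} and in \cite{AST93,AdlerSam97}. In the simplest case $n=1$ this is literally the one-dimensional estimate behind the cited level-crossing results, while the genuinely multidimensional anti-concentration for $n\ge 2$ is the technical heart of the argument. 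Assembling these pieces gives $\E\{N(f,J)\}<\infty$, and summing over the finitely many facets $J$ yields $\E\{|\p(A_u(f,T))|\}<\infty$, as required.
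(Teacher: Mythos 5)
Your framework agrees with the paper's up to a point: the reduction of $|\p(A_u(f,T))|$ to counts of critical points facet by facet, the conditioning on $\{\Gamma_k\}$ and the frequencies, and the Kac--Rice identity \eqref{appendix:gaussian:expectations} are all exactly the paper's setup, and your observation that the overall scale $S=\sum_k\Gamma_k^{-2/\alpha}$ cancels between numerator and denominator is correct. The fatal step is the next one: bounding $\E\{|\det\Hess\bar f_{|J}|\}$ by a constant $C_J$ and ``reducing everything'' to $\E\{(\det\bar\Sigma_J)^{-1/2}\}<\infty$. That reduced statement is \emph{false}, so no anti-concentration argument, however clever, can complete your route. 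Indeed, in the harmonisable case the Cauchy--Binet expansion you write down shows
\[
\det\bar\Sigma_J=\sum_{k_1<\cdots<k_n}p_{k_1}\cdots p_{k_n}\bigl(\det[\omega^J_{k_1}|\cdots|\omega^J_{k_n}]\bigr)^2
\ \le\ C\Bigl(\sum_{k\geq 2}p_k\Bigr)^{n-1},
\]
because the indices in each term are distinct (so at most one factor $p_1$ appears) and the determinants are bounded by compact support. Writing $S_2=\sum_{k\geq2}\Gamma_k^{-2/\alpha}$ and $\widetilde S=\sum_{j\geq1}(\Gamma_{j+1}-\Gamma_1)^{-2/\alpha}$, which is independent of $\Gamma_1$ and dominates $S_2$, we get $\bigl(\sum_{k\geq2}p_k\bigr)^{-1}\geq\Gamma_1^{-2/\alpha}/S_2$ and hence
\[
\E\bigl\{(\det\bar\Sigma_J)^{-1/2}\bigr\}\ \geq\ c\,
\E\bigl\{\Gamma_1^{-(n-1)/\alpha}\bigr\}\,\E\bigl\{\widetilde S^{-(n-1)/2}\bigr\}\ =\ \infty
\]
whenever $(n-1)/\alpha\geq1$, i.e.\ for every facet of dimension $n\geq 1+\alpha$: for all $n\geq3$, and already for $n=2$ when $\alpha\leq1$.

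The mechanism you are discarding is the essential one. On the event $\Gamma_1\ll\Gamma_2$ the conditioned field is close to a single plane wave, whose Hessian is close to the rank-one matrix $-p_1^{1/2}\xi_1\,\omega_1^J(\omega_1^J)^{\top}$; consequently $\E\{|\det\Hess\bar f_{|J}|\}$ is itself of order $\bigl(\sum_{k\geq2}p_k\bigr)^{(n-1)/2}$, the \emph{same} small order as $(\det\bar\Sigma_J)^{1/2}$, and not of order one. The Kac--Rice ratio stays bounded, but your two decoupled bounds on it do not see this joint degeneracy. This is precisely the cancellation the paper's combinatorial machinery exists to preserve: there the Hessian determinant is expanded, and Lemma \ref{technical} (through the event $F$ that at most $N'$ indices coincide) guarantees that every surviving term carries the same product of factors $\Gamma_k^{-1/\alpha}$ that Corollary \ref{appendix:corollary} extracts from the denominator, so the degeneracy of the Poisson weights cancels term by term, leaving only a purely frequency-dependent estimate (\eqref{Dsmall}--\eqref{EDfinite}). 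If you want to keep your normalised formulation, the repair is to treat the numerator symmetrically rather than crudely: writing $\Hess\bar f_{|J}=-\sum_k p_k^{1/2}\xi_k\,\omega_k^J(\omega_k^J)^{\top}$ with i.i.d.\ standard Gaussian $\xi_k$, Cauchy--Binet and $\E|Z|\leq(\E Z^2)^{1/2}$ give $\E_\xi\{|\det\Hess\bar f_{|J}|\}\leq\bigl(\sum_\iota p_{\iota_1}\cdots p_{\iota_n}V_\iota^4\bigr)^{1/2}$ with $V_\iota=\det[\omega^J_{\iota_1}|\cdots|\omega^J_{\iota_n}]$, so that the Kac--Rice ratio is at most
\[
\left(\frac{\sum_\iota p_{\iota_1}\cdots p_{\iota_n}V_\iota^4}{\sum_\iota p_{\iota_1}\cdots p_{\iota_n}V_\iota^2}\right)^{1/2}\leq\ \max_\iota|V_\iota|\ \leq\ (nR^2)^{n/2},
\]
a bound uniform in $\cal F$ (and the same computation, with the double index $(k,\ell)$, covers the concatenated case). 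With the cancellation retained in this way the lemma follows at once; without it, your proposal cannot be completed.
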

\begin{proof}
We shall prove the result in the notation of the  concatenated-harmonisable
case, since taking $N'=N$ gives the harmonisable case.
In view of the argument leading to
\eqref{appendix:gaussian:expectations}, and the structure of the
conditional process $\hf$ in this case (cf.\ the proof of Theorem
\ref{theorem-harmonisable} for this and following notation)
 we need to show that
\begin{equation} \label{e:0}
\E\left\{ \frac{\E\left\{\left|\det \sum_{k=1}^\infty
  \Gamma_k^{-1/\alpha}G_k 
W_k\right|\right\}}{\left(\det \sum_{k=1}^\infty
  \Gamma_k^{-2/\alpha}W_k\right)^{1/2}}\right\} <\infty,
\end{equation}
where the inner expectation is taken only with respect to the Gaussian
random variables. Recall also that each $W_k$, given by
\eqref{wkJ:defn1}, can be written in the form 
\begin{equation} \label{e:split.W}
W_k = \sum_{l=1}^{N^\prime}W_{k,l}\,,
\end{equation}
where $\bigl( W_{k,l},\, k\geq 1, \, l=1,\ldots, N^\prime\bigr)$ are
i.i.d., rank 1, $n\times n$,  random matrices of the form \eqref{Wk}. 

We concentrate on the denominator in \eqref{e:0} first. Set
\beqq
d = \left\lfloor \frac{n}{N^\prime}\right\rfloor,
\eeqq
and recall that 
the determinant of the sum of non-negative definite
matrices is at least as large as the determinant of each  of the
terms, so that
\beqq
&&\det \left(\sum_{k=1}^\infty   \Gamma_k^{-2/\alpha}W_k\right)
\\ &&\quad \geq  \det \left(\sum_{k=1}^{d}
\Gamma_k^{-2/\alpha}W_k  + \Gamma_{d+1}^{-2/\alpha} 
\Bigl( W_{d+1,1}+\ldots W_{d+1,n-dN^\prime}\Bigr)
\right)
\\ &&\quad =  \left(\prod_{k=1}^{d}
\Gamma_k^{-2/\alpha}\Gamma_{d+1}^{-2/\alpha} \right)^{N^\prime}
\, \det \left(\sum_{k=1}^{d}W_k  
+\Bigl( W_{d+1,1}+\ldots W_{d+1,n-dN^\prime}\Bigr)
\right),
\eeqq
the last line following from Corollary
\ref{appendix:corollary}. Set 
\beqq
D = \det \left(\sum_{k=1}^{d}W_k 
+ \Bigl( W_{d+1,1}+\ldots W_{d+1,n-dN^\prime}\Bigr)
\right),
\eeqq
so that, to prove
\eqref{e:0}, we need only check that 
\beq
\label{remains}
\E\left\{ \frac{\E\left\{\left|\det \sum_{k=1}^\infty
  \Gamma_k^{-1/\alpha}G_k 
W_k\right|\right\}}{D\left(\prod_{k=1}^{d}
\Gamma_k^{-2/\alpha}\Gamma_{d+1}^{-2/\alpha}
\right)^{N^\prime}}\right\}<\infty.
\eeq
For later reference, note that $D$ is a polynomial function of $n$
i.i.d.\ random vectors, each of length $n$. Under the conditions of the 
theorem, the $n^2$ random variables defining $D$ have a bounded joint density 
with compact support in $\real^{n^2}$. We claim that it follows from this
that there exists a finite $C$ such that, for $\e$ small enough, 
\beq
\label{Dsmall}
\P\{D\leq \e \} \leq C\e.
\eeq
In turn, from this it trivially follows that 
\beq
\label{EDfinite}
\E\{D^{-1/2}\}<\infty.
\eeq
To prove \eqref{Dsmall}, first write $X_1,\dots,X_n$ for the $n$
random vectors defining the determinant $D$, and let 
$A\subset \real^n$ denote the support of each one.
Fix $\e >0$ and define
\beqq
A_\e &=&  \left\{x=(x_1,\dots,x_n)\in A^n\: 
\min_{1\leq i,j\leq n} |x_i(j)|\leq \e \right\}, \\
B_\e &=&  \left\{x=(x_1,\dots,x_n)\in A^n\: D(x_1,\dots,x_n)\leq \e\right\}.
\eeqq
Then
\beq
\label{musum}
\P\{D\leq \e \} \leq \nu\left(A_\e\right) + 
\nu\left(B_\e\cap (A^n\setminus A_\e)\right),
\eeq
where $\nu$ is the measure on $A^n$ generated by the random variables
$X_1,\dots,X_n$. Both of these terms are easily seen to be bounded by a 
constant multiple of $\e$, of $\e$ is small enough. The first is small, since
$A_\e$ can be covered by a finite
 number of $n$-dimensional rectangles, one of 
whose sides has length no more than $2\e$ and the remaining sides having
length no more than the diameter of $A$. Since $\nu$ has bounded
density, this gives that $\mu\left(A_\e\right) \leq C\e$ for some finite $C$.

As far as the second term in \eqref{musum} is concerned, we note that,
because of the smoothness of the mapping $D\:\real^{n^2}\to\real$ in the
region $A^n\setminus A_\e$, the set $B_\e\cap (A^n\setminus A_\e)$ is 
a $C^\infty$, locally convex, stratified manifold, and so it follows from the
generalized tube formula of \cite{RFG} (Theorem 10.9.5) that this 
set also has $\nu$-measure bounded by $C\e$.

Returning now to \eqref{remains}, note that by expanding the determinant 
and applying  Lemma \ref{technical} 
we have that 
\eqref{remains}  is equivalent to
\beqq
\E\left\{ \frac{\E\left\{\left|
\sum_{k_1=1}^\infty %\sum_{k_2=1}^\infty
\ldots \sum_{k_n=1}^\infty
\Gamma_{k_1}^{-1/\alpha}%\Gamma_{k_2}^{-1/\alpha}
 \ldots \Gamma_{k_n}^{-1/\alpha}
G_{k_1} \ldots G_{k_n}\, B_{k_1 \ldots k_n}\indic_F
 \right|\right\}}{D\left(\prod_{k=1}^{d}
\Gamma_k^{-2/\alpha}\Gamma_{d+1}^{-2/\alpha}
\right)^{N^\prime}}\right\},
\eeqq
where the
$ B_{k_1\ldots k_n}$ are uniformly bounded random
variables,  independent of $\Gamma_j$ and the Gaussian $G_j$,
and $F$ is the event that  at
  most $N^\prime$ of $k_1,\ldots, k_n$ are equal.

Fixing the $\Gamma_j$, applying the boundedness of the $B_{k_1\ldots
  k_n}$, taking an expectation over the Gaussian $G_j$ and using
  their symmetry, bounds the above by
\begin{equation}\label{e:1}
C\, \E\left\{ \left(\frac{1}{D} \sum_{k_1=1}^\infty %\sum_{k_2=1}^\infty
\ldots \sum_{k_n=1}^\infty
\frac{ \Gamma_{k_1}^{-2/\alpha}%\Gamma_{k_2}^{-2/\alpha}
 \ldots \Gamma_{k_n}^{-2/\alpha}}{\left(\prod_{k=1}^{d}
\Gamma_k^{-2/\alpha}\Gamma_{d+1}^{-2/\alpha}
\right)^{N^\prime}} \indic_F
\right)^{1/2}\right\},
\end{equation}
where $C$ is a finite constant which may change from line to line. 
Now use the fact that $D$ is independent of all the other random variables 
in the above expectation, along with \eqref{EDfinite}, to remove the 
factor of $D^{-1}$ from the expectation, with an appropriate change of the
constant $C$. 

We may, and shall, assume
that, in the $k$-fold sum in \eqref{e:1},  $k_1\leq k_2\leq\dots\leq k_n$.
Thus what remains to show is that the remaining expectation, 
which we rewrite as
\beqq
&&\E\left\{ \left( \sum_{k_1=1}^\infty %\sum_{k_2=1}^\infty
\ldots \sum_{k_{n-1}=k_{n-2}+1}^\infty
\frac{ \Gamma_{k_1}^{-2/\alpha}
 \ldots \Gamma_{k_{n-1}}^{-2/\alpha}}{\left(\prod_{k=1}^{d}
\Gamma_k^{-2/\alpha}\right)^{N^\prime}
\left(\Gamma_{d+1}^{-2/\alpha}\right)^{N^\prime-1}}
\right. \right.
\\ &&\qquad\qquad\qquad\qquad\qquad\qquad\qquad\qquad\times\left.\left.
\sum_{k_n=k_{n-1}+1}^\infty {\Gamma_{d+1}^{2/\alpha}}
{\Gamma_{k_n}^{-2/\alpha}}
 \indic_F
\right)^{1/2}\right\},
\eeqq
 is finite. Consider the summation over $k_n$, keeping 
$k_1,\ldots, k_{n-1}$ fixed. Since each term in
the ($n$-fold) sum is bounded from above by
1, we can  begin the summation over $k_n$ at an
arbitrary large value $K$. Then, the expectation in the last sum
is bounded by 
\beq
\label{lastterm}
\E\left\{ \left( \Gamma_{d+1}^{2/\alpha}\sum_{k_n=K}^\infty
  \Gamma_{k_n}^{-2/\alpha}\right)^{1/2}\right\}.
\eeq
Since Gamma random variables have all moments finite, by H\"older's
inequality it is  enough to prove that for  $\vep\leq 1$,
\beq
\label{e:17}
\E\left\{ \left( \sum_{k_n=K}^\infty
  \Gamma_{k_n}^{-2/\alpha}\right)^{(1+\vep)/2}\right\}<\infty.
\eeq
Furthermore, since it is easy to check that
\beqq
\E\left\{\Gamma_{k}^{-2/\alpha}\right\}\leq C k^{-2/\alpha}
\eeqq
for $k$ large enough, the finiteness of \eqref{lastterm} now follows from the
fact that the expectation in \eqref{e:17} is bounded by
\beqq
\left( \sum_{k_n=K}^\infty
\E \left\{ \Gamma_{k_n}^{-2/\alpha}\right\}\right)^{(1+\vep)/2}
\eeqq
and that $\a <2$. 

Next, we consider the double sums obtained by fixing 
$k_1,\ldots, k_{n-2}$ and taking the sum only over $k_{n-1}$ and
$k_n$. As in the case we have just considered,
we can start the sum over $k_{n-1}$
at a (large) $K$ of our choice, and then the expectation involving the
double  summation, with an additional
 factor of $\Gamma_{d+1}^{2/\a}$ brought into
play, is bounded above by 
\beqq
\E\left\{ \left(
\Gamma_{d+1}^{4/\alpha}\sum_{k_{n-1}=K}^\infty\sum_{k_{n}=K}^\infty 
 \Gamma_{k_{n-1}}^{-2/\alpha}
 \Gamma_{k_n}^{-2/\alpha}\right)^{1/2}\right\}. 
\eeqq
The finiteness of this expression follows as above, 
from the easily checkable  fact that 
\beqq
\E\left\{ \Gamma_{k_1}^{-2/\alpha}\Gamma_{k_2}^{-2/\alpha}\right\}\leq C\,
k_1^{-2/\alpha}k_2^{-2/\alpha}
\eeqq
for all $k_1\leq k_2$ large enough. Iterating this argument, we arrive
at the finiteness of \eqref{e:1}, once we prove that, for all $k_1\leq
k_2\leq \ldots\leq k_n$ large 
enough,
\beqq
\E\left\{ \prod_{j=1}^n \Gamma_{k_j}^{-2/\alpha}\right\} \leq
C\, \prod_{j=1}^n j^{-2/\alpha}.
\eeqq
This, however, follows from H\"older's inequality and the fact that
for all $k$ large enough,
\beqq
\E\left\{\Gamma_{k}^{-2n/\alpha}\right\}\leq C\, k^{-2n/\alpha}\,.
\eeqq
This completes the argument.
\qed
\end{proof}

\begin{lemma}
\label{lemma:croftontrick}
The exchange of expectation and integration in \eqref{croftontrick} is
justified.
\end{lemma}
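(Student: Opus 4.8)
The plan is to verify the hypotheses of Tonelli's theorem for the nonnegative integrand $\E\left\{\left|\lips_0(A_u(f,M)\cap V)\right|\right\}$, which reduces the entire matter to showing that
\[
\int_{\graff(N,N-j)} \E\left\{\left|\lips_0\left(A_u(f,M)\cap V\right)\right|\right\}\, d\lambda^N_{N-j}(V) < \infty .
\]
Two ingredients enter. The first is that the domain of integration is effectively of finite measure: since $A_u(f,M)\cap V\subseteq M\cap V$, the integrand vanishes unless $V\cap M\neq\emptyset$, and for compact convex $M$ the set of such flats has finite kinematic measure. Indeed, whenever $M\cap V\neq\emptyset$ the section $M\cap V$ is convex and nonempty, so $\lips_0(M\cap V)=1$, whence Crofton's formula \eqref{crofton:formula}, taken with the index of the integrand equal to $0$, gives $\lambda^N_{N-j}(\{V:V\cap M\neq\emptyset\})=\lips_j(M)<\infty$.

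The second, and main, ingredient is a bound on the integrand that is uniform in $V$. Fixing a flat $V$ meeting $M$ and writing $K=M\cap V$, I would note that $A_u(f,M)\cap V$ is the excursion set $A_u(f_{|V},K)$ of the restricted field on the compact convex body $K\subseteq V$, and bound $|\lips_0(A_u(f_{|V},K))|$ by the total number of critical points of $f_{|V}$ over the strata of $K$, exactly as in the appendix. Conditioning on the $\sigma$-field $\cal F$ that renders $f$ Gaussian, the conditional field $\hf_{|V}$ is stationary Gaussian; crucially, because $f$ is \emph{isotropic}, its covariance, and hence all the conditional Kac--Rice critical-point densities, are identical for every $V$. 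Applying the conditional mean formula \eqref{appendix:gaussian:expectations} stratum by stratum and summing, one obtains
\[
\E\left\{\left.\left|\lips_0\left(A_u(f,M)\cap V\right)\right|\ \right|\ \cal F\right\}\ \le\ \sum_{i=0}^{N} c_i(\cal F)\,\lips_i(K),
\]
where the coefficients $c_i(\cal F)$ depend only on $\cal F$ and the $V$-independent isotropic spectral data. Since intrinsic volumes are intrinsic and monotone under inclusion of convex bodies, and $K\subseteq M$, we have $\lips_i(K)\le\lips_i(M)$, so the right-hand side is at most $\sum_{i=0}^{N} c_i(\cal F)\,\lips_i(M)$. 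Taking the outer expectation and invoking Lemmas \ref{appendix:subgauss} and \ref{lemma:harmonisable-finite} (which assert precisely that $\E\{c_i(\cal F)\}<\infty$ in each of the three cases) produces a finite bound $B_M=\sum_{i=0}^{N}\E\{c_i(\cal F)\}\lips_i(M)$ that does not depend on $V$.

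Combining the two ingredients gives $\int_{\graff(N,N-j)}\E\{|\lips_0(A_u(f,M)\cap V)|\}\,d\lambda^N_{N-j}(V)\le B_M\,\lips_j(M)<\infty$, so Tonelli's theorem applies and the exchange of expectation and integration in \eqref{croftontrick} is legitimate. The step I expect to be the main obstacle is the uniform-in-$V$ bound: as $V$ approaches tangency with $M$ the section $K=M\cap V$ degenerates to a thin, lower-dimensional slice, and one must ensure the geometry of the restricted field does not blow up. This is exactly what isotropy buys us, since the critical-point densities $c_i(\cal F)$ are independent of the position and orientation of $V$, while monotonicity of the intrinsic volumes guarantees that such degenerate sections only decrease, never increase, the bound.
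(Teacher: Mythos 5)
Your overall architecture (reduce to finiteness of $\int\E\{|\lips_0(A_u(f,M)\cap V)|\}\,d\lambda^N_{N-j}(V)$, bound $|\lips_0|$ by critical-point counts, exploit isotropy to get a $V$-independent bound, and use the finite kinematic measure of the flats hitting $M$) is close to the paper's, and two of your ingredients are sound: the identity $\lambda^N_{N-j}(\{V\colon V\cap M\neq\emptyset\})=\lips_j(M)$ and the monotonicity $\lips_i(M\cap V)\le\lips_i(M)$ are both correct, and the latter is a legitimate alternative to the paper's finishing move (the paper instead integrates $\lips_k(M\cap V)$ over the Grassmannian by a second application of Crofton's formula). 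However, there is a genuine gap in your middle step: you claim that, conditionally on ${\cal F}$, the Kac--Rice critical-point densities of $\hf_{|V}$ are the same for every $V$ ``because $f$ is isotropic''. Isotropy of $f$ does not survive the conditioning. For the harmonisable and concatenated-harmonisable fields, conditioning on $\{\Gamma_k\}$ and $\{\omega_k\}$ produces the covariance $R(t)=\gamma_\alpha^2\sum_k\Gamma_k^{-2/\alpha}\cos(t\omega_k)$, which is anisotropic: its spectral directions are the now-frozen $\omega_k$. Hence the conditional densities genuinely depend on the orientation of $V$, and coefficients $c_i({\cal F})$ defined so as to be $V$-independent would have to be suprema over all orientations; Lemmas \ref{appendix:subgauss} and \ref{lemma:harmonisable-finite} do not assert $\E\{c_i({\cal F})\}<\infty$ for any such object --- they assert only that $\E\{|\p(A_u(f,T))|\}<\infty$ for a fixed rectangle $T$, whose facets have coordinate-aligned orientations. (Your argument is fine as stated in the sub-Gaussian case, where the conditional field $x^{1/2}g$ is still isotropic.)

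The repair is to invoke isotropy only \emph{after} the expectation over ${\cal F}$ has been taken, which is exactly what the paper does: the unconditional functional $A\mapsto\E\{\psi(A)\}$, where $\psi(A)$ is the number of critical points of $f$ (restricted to the affine hull of $A$) in $A$ and on its boundary strata, is additive, continuous in the Hausdorff metric, and --- by isotropy of the unconditional law of $f$ --- invariant under rigid motions; Hadwiger's theorem then gives $\E\{\psi(A)\}=\sum_k c_k\lips_k(A)$ with constants independent of the position and orientation of $A$, and their finiteness is supplied by the appendix arguments. With this unconditional bound replacing your conditional one, your finish (monotonicity of intrinsic volumes plus finiteness of the hitting measure) goes through, as does the paper's finish via Crofton.
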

\begin{proof}
In the notation of \eqref{croftontrick}, it will suffice to show that 
\beq
\label{app:crofton}
\int_{\graff(N,N-j)}
\E \left\{\left|\lips_0\left(A_u(f,M) \cap V\right|\right) \right \}
\, d\lambda^N_{N-j}(V) < \infty.
\eeq
Note first that \eqref{croftontrick} was only being applied under the 
conditions of one of the main theorems of the paper, so we know that the
inner expectation is always finite. We also know, from the discussions of 
this appendix that, for any convex $A$,
\beqq
\left|\lips_0\left(A_u(f,A)\right) \right| \leq \psi(A),
\eeqq
where we define $\psi (A)$ to be the number of critical points of $f$ in 
$A$ and on its various boundaries. All of these also have finite 
expectations. The functional $\E\{\psi\}$ is clearly additive (since $\psi$ 
itself is) in the sense of \eqref{G6.1.2}, as well as being invariant under
rigid motions (by isotropy) and continuous in the Hausdorff metric.
Thus, by Hadwiger's theorem (cf.\ \eqref{geometry:additivity-func:equation}) 
there exist  constants $c_j$, dependent of $u$ and the structure of $f$, such
that 
\beqq
\E\{\psi\} &=& \sum_{k=0}^N c_k \lips_k(A). 
\eeqq
Substituting into \eqref{app:crofton} we therefore have
\beqq
&&\int_{\graff(N,N-j)} 
\E \left\{\left|\lips_0\left(A_u(f,M) \cap V\right)\right| \right\}
\, d\lambda^N_{N-j}(V)\\ 
&&\qquad\qquad\qquad \leq  \int_{\graff(N,N-j)}
\E \left\{\psi(M \cap V) \right\}
\, d\lambda^N_{N-j}(V)\\
&&\qquad\qquad\qquad
= \int_{\graff(N,N-j)} \sum_{k=0}^N c_k \lips_k(M\cap V)\, d\lambda^N_{N-j}(V)\\
&&\qquad\qquad\qquad =
\sum_{k=0}^N c_k \int_{\graff(N,N-j)}  \lips_k(M\cap V)\, d\lambda^N_{N-j}(V)
\\
&&\qquad\qquad\qquad =
\sum_{j=0}^N c_j \sqbinom{k+j}{k}\lips_{k+j}(M),
\eeqq
the last line following from  Crofton's formula \eqref{crofton:formula}. 
Since $M$ is compact and convex, the  $\lips_j(M)$ are all finite, and 
so we are done.
\qed
\end{proof}

\bibliographystyle{apt}
\bibliography{grf-bib.bib,/cygdrive/c/Gena/texfiles/bibfile}

\address{Robert J.\ Adler \\
Industrial Engineering and Management\\ Technion, Haifa,
Israel 32000 \\
%E-mail: \printead[robert@ieadler.technion.ac.il]{e1,e2}
\printead{e1}\\
\printead{u1}}

\noindent  \address{Gennady Samorodnitsky\\
School of Operations Research\\ and Information Engineering\\
Cornell University, Ithaca, NY, 14853.\\
%E-mail: \printead[gennady@orie.cornell.edu]{e1,e2}
\printead{e2}\\
\printead{u2}}

\noindent  \address{Jonathan E.\ Taylor\\
Department of Statistics\\ Stanford University\\ Stanford, CA, 94305-4065. \\
\printead{e3}\\
\printead{u3}}

\end{document}